\documentclass[reqno, 11pt]{amsart}
\usepackage[utf8]{inputenc}
\usepackage{amssymb,amsmath,amsfonts,amsthm,calrsfs,mathtools}
\usepackage[all]{xy}
\usepackage{color}
\usepackage[english]{babel}
\usepackage[T1]{fontenc}
\usepackage{multicol}
\usepackage{latexsym}
\usepackage{enumitem}
\usepackage{tabularx}
\usepackage{mdframed}
\usepackage{mathrsfs}
\usepackage[a4paper,top=3cm,bottom=2cm,left=3cm,right=3cm,marginparwidth=1.75cm]{geometry}
\usepackage[colorinlistoftodos]{todonotes}
\usepackage[colorlinks=true, allcolors=red]{hyperref}
\usepackage{tikz}
\usetikzlibrary{shapes.geometric}
\usetikzlibrary{arrows.meta}
\usepackage{algorithmic}
\usepackage{algorithm}
\graphicspath{{figures/}}
\usepackage{caption}
\usepackage{subcaption}
\usepackage{xcolor}
\usepackage{booktabs}
\theoremstyle{plain}
\newtheorem{theorem}{Theorem}

\newtheorem{proposition}{Proposition}

\newtheorem{corollary}{Corollary}

\theoremstyle{definition}

\newtheorem{definition}{Definition}

\newtheorem{remark}{Remark}

\providecommand{\keywords}[1]

\title{Topological Detection of Hopf Bifurcations via Persistent Homology: A Functional Criterion from Time Series}
\author{Jhonathan Barrios} 
\address{Centre of Mathematics (CMAT),University of Minho, Portugal }
\email{id10605@uminho.pt}
\thanks{Corresponding author}
\author{Y\'asser Ech\'avez}
\address{Department of Mathematics, Universidad del Atl\'{a}ntico, Colombia}
\email[(Yásser Echávez)]{yjechavez@mail.uniatlantico.edu.co}

\author{Carlos F. \'{A}lvarez}
\email[(Carlos F. Álvarez)]{cfalvarez@mail.uniatlantico.edu.co}

\subjclass{Primary 37N30 Secondary 37G15, 37M10, 55N31}\makeatletter

\date{\today}

\keywords{Hopf bifurcation, topological data analysis, persistent homology, time series analysis, nonlinear dynamical systems}
\begin{document}

\begin{abstract}
We propose a topological framework for detecting Hopf-type dynamical transitions directly from scalar time series. The method combines delay-coordinate reconstruction with persistent homology and uses the maximum persistence of one-dimensional homology classes as a scalar descriptor of cyclic structure. For the supercritical Hopf setting, we derive finite-resolution persistence bounds that relate detectability of the reconstructed periodic orbit to its geometry, sampling quality, and finite-data perturbations. A derivative-based estimator is then introduced to localize the critical parameter from the sampled topological functional. The approach is evaluated on the Hopf normal form, the Lorenz system, and a reduced Belousov--Zhabotinsky model. The numerical experiments show accurate finite-resolution localization of the corresponding transitions and illustrate the effects of embedding parameters, temporal sampling, smoothing, observational noise, and transient removal. These results support persistent homology as an interpretable data-driven tool for detecting geometric reorganizations in nonlinear time series.
\end{abstract}

\maketitle
\markright{HOPF'S TOPOLOGICAL CRITERION}

\section{Introduction}

Bifurcations constitute one of the fundamental mechanisms through which nonlinear dynamical systems undergo qualitative changes in their behavior \cite{Hassard1981}. Among them, the Hopf bifurcation occupies a central place: as a pair of complex-conjugate eigenvalues crosses the imaginary axis, a periodic orbit is locally created or destroyed under the usual nondegeneracy conditions \cite{Hassard1981,Perko2013,Strogatz2018}. In the supercritical case, which is the principal theoretical setting of this work, a stable equilibrium loses stability and an attracting periodic orbit emerges. Hopf bifurcations provide a classical mechanism for the onset of self-sustained oscillations in physical, biological, chemical, and engineering systems, including circadian rhythms, neuronal dynamics, oscillatory chemical reactions, and stability problems in control and
electrical systems \cite{GonzeGoldbeter2006,Izhikevich2007}. Consequently, localizing the transition between equilibrium and oscillatory regimes from observed data is relevant for understanding and characterizing qualitative changes in nonlinear dynamics \cite{Kuznetsov2004,Strogatz2018}.

Classical bifurcation analysis relies on tools such as Jacobian eigenvalues, normal forms, bifurcation diagrams, phase portraits, and Lyapunov exponents \cite{Barreira2017,Kuznetsov2004,Perko2013}. Local spectral and  normal-form methods typically require knowledge of the governing equations or sufficient structural information about the underlying system. Although dynamical indicators such as Lyapunov exponents can also be estimated from data, their numerical estimation may depend strongly on sampling, series length, reconstruction choices, and measurement noise \cite{BradleyKantz2015}. These considerations motivate complementary data-driven approaches for detecting qualitative dynamical changes when only observed time series are available.

Takens' theorem provides a mathematical foundation for studying dynamical systems from scalar observations. Under generic conditions, the underlying attractor can be reconstructed by a delay-coordinate map constructed from a single scalar time series \cite{SauerYorkeCasdagli1991,Takens1981}. This makes it possible to study geometric and topological properties of the dynamics without direct access to the governing equations.

Topological Data Analysis (TDA), and persistent homology in particular, offers a multiscale description of such reconstructed geometry. Persistent homology tracks the birth and death of topological features, including connected components, cycles, and higher-dimensional structures, as a scale parameter varies \cite{ChazalMichel2021,EdelsbrunnerHarer2010,Ghrist2008}. Persistence therefore provides a natural way to quantify the prominence and stability of geometric structures in reconstructed time series without identifying short-lived features a priori as measurement noise \cite{Perea2015}.

Persistence-based approaches to dynamical transitions have already been developed in several forms. Berwald and Gidea \cite{BerwaldGidea2014} used persistent homology of delay-embedded time series to detect critical transitions and introduced a quantitative indicator based on Wasserstein distances between persistence diagrams from nearby windows. Subsequent approaches have employed CROCKER plots \cite{Guzel2022}, zigzag persistent homology \cite{Tymochko2020}, and other topological approaches to dynamical and stochastic bifurcation detection \cite{Maletic2016,Tanweer2024}. Thus, the present work is not the first quantitative use of persistent homology for transition detection. Its distinction lies in the particular scalar observable considered and in the accompanying geometric analysis: we study the maximum one-dimensional persistence of each delay reconstruction and derive finite-resolution conditions connecting its detectability to the geometry of the reconstructed periodic orbit, sampling density, and finite-data perturbations.

Hopf bifurcation detection from time series has also been approached through non-topological methods. Classical early-warning indicators, including variance, autocorrelation, and quantities associated with critical slowing down, require no supervised training data but may not uniquely identify the underlying transition mechanism \cite{BoettigerHastings2012,Scheffer2009}. More recently, data-driven and deep-learning methods have been developed to classify or anticipate bifurcations from simulated, raw, or delay-embedded time series \cite{Bury2021, Park2026}. Such approaches address a complementary problem and generally provide a different form of interpretability from the geometric criterion studied here.

Against this background, we develop a topological framework centered on the dominant one-dimensional persistence functional. In the supercritical attracting-cycle setting, the transition from an equilibrium to a periodic orbit changes the reconstructed geometry from a point-like set to a cyclic set. We establish pointwise persistence bounds for this transition and a finite-resolution separation criterion in terms of reach, sampling density, and finite-data perturbations. A finite-sample Gaussian bound connects the deterministic stability result to the observational-noise model used numerically. We then introduce a derivative-based estimator that localizes the strongest change of the sampled persistence functional along an ordered parameter sweep. The theoretical detectability result and the numerical localization procedure are deliberately distinguished: the former applies to the supercritical attracting-cycle regime, whereas the latter can also be used descriptively outside that setting.

The numerical study reflects this distinction. The Hopf normal form provides the canonical supercritical case with a known critical parameter and is used to examine the predicted square-root onset and finite-resolution effects. The classical Lorenz system is included deliberately as an empirical stress test: its Hopf bifurcation is subcritical and therefore lies outside the attracting-periodic-orbit hypotheses of the main theorem. A reduced Belousov--Zhabotinsky model
provides a complementary supercritical configuration in which the attracting oscillatory branch lies on the opposite side of the chosen parameter orientation. Together, these experiments assess localization, sensitivity to numerical and reconstruction choices, observational noise, and near-critical transient effects.



\section{Mathematical Background}\label{sec:background}

This section fixes notation and recalls the facts used in Section~\ref{sec:criterion}, at the level of detail needed by a reader who knows one of the two source fields well and the other only in passing. Readers fluent in bifurcation theory can skip to Section~\ref{subsec:homology}; readers fluent in persistent homology can skip to Section~\ref{sec:criterion}.

\subsection{Hopf bifurcations}
Although the notion of dynamical system is broad in mathematics, throughout this work we restrict attention to dynamical systems generated by the solution (flow) of a system of ordinary differential equations. A \textit{bifurcation} of a dynamical system is a qualitative change in its dynamics produced by varying a parameter. Consider a one-parameter family of smooth dynamical systems
\begin{equation}\label{eq:system}
  \dot{x} = f(x,\mu), \qquad x \in \mathbb{R}^n, \quad \mu \in I \subset \mathbb{R},
\end{equation}
where $\mu$ is a control parameter. An \textit{equilibrium point} $x^{\ast}$ satisfies $f(x^{\ast},\mu)=0$. Recall that the phase space of system \eqref{eq:system} is $\mathbb{R}^n$, the space of all possible states $x$. For fixed $\mu_{0}\in I$, a closed orbit $\Gamma_{\mu_{0}}$ is called a \textit{limit cycle} if there exists $\varepsilon>0$ such that no other closed orbit lies within distance $\varepsilon$ of $\Gamma_{\mu_{0}}$.

A \textit{Hopf bifurcation} occurs when a pair of complex conjugate eigenvalues $\lambda(\mu)$ of the Jacobian $Df(x^{\ast},\mu)$ crosses the imaginary axis as $\mu$
varies. Under suitable non-degeneracy conditions, this transition leads to the emergence of a limit cycle in a neighborhood of the equilibrium \cite{Kuznetsov2004,Strogatz2018}. More precisely, there exists a critical value $\mu_{c}$ such that
\[  \operatorname{Re}(\lambda(\mu_{c}))=0.\] For parameter values near $\mu_{c}$, the system transitions between a stable equilibrium and periodic dynamics. This phenomenon describes the emergence of self-sustained oscillations in numerous physical, biological, and chemical systems.

Concretely: near $(x^*,\mu_c)$, after a smooth change of coordinates the flow reduces to a two-dimensional system which, in polar coordinates $(r,\theta)$ centered at $x^*$, takes the form
\[\dot r = \mu r + l_1 r^3 + O(r^5), \qquad \dot\theta = \omega + O(r^2),\]
where $l_1$ is the \emph{first Lyapunov coefficient} and $\omega=\operatorname{Im}\lambda(\mu_c)\neq0$ \cite{Kuznetsov2004}. Truncating at cubic order gives the normal form used in the numerical experiments below (Case A, Section~\ref{sec:experiments}). For $l_1<0$, $r=0$ is stable for $\mu<0$, loses stability at $\mu=0$, and a stable circle of radius $r=\sqrt{-\mu/l_1}$ appears for $\mu>0$: the reconstructed point cloud switches from concentrating near a single point to concentrating near a circle whose radius grows from zero. Tracking this radius growth topologically is precisely the goal of Section~\ref{sec:criterion}.

\begin{remark}[Supercritical versus subcritical Hopf bifurcations]
After translating the critical parameter so that $\nu=\mu-\mu_c$, the local radial normal form is

\[
\dot r=\nu r+l_1r^3+O(r^5).
\]

If $l_1<0$, the Hopf bifurcation is supercritical, a stable periodic orbit exists on the side $\nu>0$, and its radius tends continuously to zero as $\nu\downarrow0$. This is the setting considered in Theorem \ref{thm:hopf}, because post-transient trajectories sample the attracting periodic orbit and therefore produce a cyclic reconstructed point cloud.

If $l_1>0$, the Hopf bifurcation is subcritical. The small periodic branch exists on the opposite side of the critical parameter and is unstable; its radius also tends to zero as the bifurcation is approached. The difficulty for the present data-driven framework is therefore not a discontinuous finite-size collapse of the local periodic orbit, but its instability: generic post-transient time series do not converge to this branch and hence do not directly sample it as an attracting set. The theoretical result below is consequently restricted to the supercritical attracting-cycle case. Subcritical transitions may nevertheless produce detectable changes in reconstructed geometry, but such behavior lies outside the guarantee of Theorem \ref{thm:hopf} and is treated only empirically in the experiments.

\end{remark}

\subsection{Phase space reconstruction}
\label{subsec:reconstruction}

In many applications the underlying dynamical system is not explicitly known and only a scalar time series is observed. Examples arise in gait dynamics, biomechanical sensors, and financial time series, where the system must be inferred from data \cite{Barrios2026,Gidea2018}. Here, an \textit{attractor} is a compact invariant set toward which all nearby orbits converge asymptotically. Takens' theorem states that, under generic conditions, the geometry of the underlying attractor can be reconstructed by means of a delay embedding constructed from the observed time series \cite{Takens1981}.

Let $x(t)$ be a scalar time series. The \textit{delay embedding} is the map
\begin{equation}\label{eq:embedding}
\Phi(t) =\bigl(x(t),\, x(t-\tau),\, x(t-2\tau),\, \dots,\, x(t-(m-1)\tau)\bigr)  \in \mathbb{R}^{m},
\end{equation}
where $\tau>0$ is the time delay and $m\geq 1$ is the embedding dimension. Under generic conditions, $\Phi$ defines an embedding of the original attractor into $\mathbb{R}^{m}$ that preserves its topological and geometric properties \cite{Takens1981}; this result was later extended to attractors of fractal dimension in \cite{SauerYorkeCasdagli1991}. Intuitively, $\Phi$ works because each delayed coordinate $x(t-k\tau)$ is a different, generically nonlinear function of the full state at time $t$ (namely, the state evolved backward by $k\tau$ and re-observed through the same scalar sensor); stacking $m\geq 2n+1$ such coordinates supplies enough independent views of the trajectory to separate points that a single scalar observation would confuse, much as several projections of a curve resolve self-intersections that any one projection creates. This heuristic also explains a known practical failure mode, revisited in Remark~\ref{rem:delay_dependence}: if $\tau$ is too small relative to the sampling rate, consecutive coordinates $x(t-k\tau)$ are nearly equal, the $m$ ``different views'' collapse onto one another, and $\Phi$ flattens the attractor toward the diagonal $x_1=\cdots=x_m\subset\mathbb{R}^m$ instead of unfolding it. The resulting point cloud

\begin{equation}\label{eq:cloud}
  X = \{\Phi(t_{k})\}_{k=1}^{N} \subset \mathbb{R}^{m}
\end{equation}
thus constitutes a reconstruction of the attractor from the observed time series.

\subsection{Persistent homology}\label{subsec:homology}

For a reader whose background is in dynamical systems rather than computational topology, it is most useful to see concretely what happens to a point cloud sampled near a circle, since this is exactly the situation this paper analyzes (Section~\ref{sec:criterion}).

Recall that a \textit{simplicial complex} is a finite collection of simplices (vertices, edges, triangles, and their higher-dimensional analogues) closed under taking faces. Given a finite point cloud $X\subset\mathbb{R}^{m}$ and a scale parameter $\varepsilon\geq 0$, the \textit{Vietoris--Rips complex} (VR) $\mathrm{VR}(X,\varepsilon)$ is the simplicial complex whose simplices are subsets of $X$ with pairwise distances at most $\varepsilon$. Varying $\varepsilon$ yields a nested family $\{\mathrm{VR}(X,\varepsilon)\}_{\varepsilon\geq 0}$, called a \textit{filtration}.

Suppose
\begin{equation*}
   X=\{x_0,\ldots,x_{N-1}\}, \qquad N\geq4,
\end{equation*}

consists of $N$ equally spaced points on a circle of radius $r$. Let

\begin{equation*}
  \ell_1 = 2r\sin\left(\frac{\pi}{N}\right)
\end{equation*}

denote the Euclidean distance between adjacent samples, and let

\begin{equation*}
 \ell_2 = 2r\sin\left(\frac{2\pi}{N}\right)
\end{equation*}

denote the distance between second neighbors. For $0\leq\varepsilon<\ell_1$, the Vietoris--Rips complex contains no edges and therefore consists of $N$ isolated vertices. When $\ell_1\leq\varepsilon<\ell_2$, precisely adjacent vertices are connected. The resulting
Vietoris--Rips complex is the cycle graph $C_N$, and therefore

\begin{equation*}
   H_1(VR(X,\varepsilon);\mathbb F_2) \cong \mathbb F_2.
\end{equation*}

At larger scales, additional chords and higher-dimensional simplices enter the filtration and eventually fill the loop. In particular, once $\varepsilon\geq\operatorname{diam}(X)$, every pair of vertices is connected, the Vietoris--Rips complex is the full
simplex and its first homology is trivial.

This elementary example illustrates the two geometric scales underlying persistence: a local scale at which neighboring observations connect and a larger scale associated with the global cyclic geometry. Section~\ref{sec:topological_criterion} replaces this elementary picture by quantitative bounds involving the sampling density and the reach of the reconstructed periodic orbit.

Following Hatcher \cite[Chapter~2]{Hatcher2002}, the first homology group $H_{1}$ of a topological space is the abelianization of its fundamental group. For the purposes of persistent homology, $H_1$ is computed with coefficients in a field (throughout this work, $\mathbb{Z}/2\mathbb{Z}$), so that each $H_1(\mathrm{VR}(X,\varepsilon))$ is a vector space and the persistence module $\{H_1(\mathrm{VR}(X,\varepsilon))\}_{\varepsilon\geq0}$ admits the interval-decomposition on which the persistence diagram is based; this is what makes birth and death scales well defined. In the circle example above, $H_1(C_N;\mathbb F_2)\cong\mathbb F_2$, generated by the class represented
by the loop traversing the cycle graph once.

Persistent homology tracks the birth and death of homology classes as $\varepsilon$ varies across the filtration, over all dimensions and all classes at once (not only the single dominant one of the circle example). The output is a \textit{persistence diagram}
\[D = \{(b_{i}, d_{i})\},\]
where $b_{i}$ and $d_{i}$ denote, respectively, the birth and death scale of the $i$-th homological feature, and the quantity $d_{i}-b_{i}$ measures its \textit{persistence} \cite{Wasserman2018}. Applied to dynamical systems, non-trivial classes in $H_{1}$ are typically associated with periodic orbits or limit cycles in the reconstructed attractor \cite{Perea2019}, exactly as in the example above; classes with very short persistence, $d_i\approx b_i$, are less robust to perturbations and may arise from sampling variability or small-scale geometric structure rather than from a coherent large-scale loop.

The maximum persistence functional used in this work is not itself new: an analogous quantity, computed from delay-embedded persistence diagrams, was used by \cite{Emrani2014} to detect harmonic structure in respiratory sound signals for wheeze detection, and the same quantity is known in Hamiltonian Floer theory as the \emph{boundary depth} of a persistence module \cite{Polterovich2020}. Our contribution is not the functional itself but the threshold-crossing guarantee of Theorem~\ref{thm:hopf} that accompanies it in the Hopf bifurcation setting.

For a finite point cloud $Y\subset\mathbb R^m$ and a scale $\varepsilon\ge0$, let
\begin{equation*}
\beta_i(\varepsilon;Y):=\dim_{\mathbb F_2}H_i\big(\mathrm{VR}(Y,\varepsilon);\mathbb F_2\big)
\end{equation*}
denote the $i$-th Betti number of the Vietoris--Rips complex at scale $\varepsilon$. This quantity counts the independent $i$-dimensional homology classes present at scale $\varepsilon$ and underlies the aggregate descriptor $\|\beta_1\|_{L^1}$ used in Section~\ref{sec:methodology}.

Two compact sets $A,B\subset\mathbb R^m$ are compared through their Hausdorff distance
\begin{equation}\label{eq:hausdorff_def}
d_H(A,B):=\max\Big\{\sup_{a\in A}\inf_{b\in B}\|a-b\|,\ \sup_{b\in B}\inf_{a\in A}\|a-b\|\Big\}.
\end{equation}
Hypotheses \textnormal{(H2)} and \textnormal{(H3)} in Section~\ref{sec:topological_criterion} are stated in terms of $d_H$. Two abstract metric spaces $X$ and $Y$ are compared through their Gromov--Hausdorff distance $d_{GH}(X,Y)$, defined as the infimum of the Hausdorff distance between the images of $X$ and $Y$ under isometric embeddings of both spaces into a common metric space $Z$. When $A,B\subset\mathbb R^m$ already share the Euclidean ambient space, the identity embedding gives $d_{GH}(A,B)\le d_H(A,B)$. Two persistence diagrams $D$ and $D'$ are compared through their bottleneck distance
\begin{equation}\label{eq:bottleneck_def}
d_B(D,D'):=\inf_{\gamma}\sup_{p\in D\cup\Delta}\|p-\gamma(p)\|_\infty,
\end{equation}
where $\Delta$ denotes the diagonal $\{(t,t)\mid t\in\mathbb R\}$ counted with infinite multiplicity and the infimum ranges over bijections $\gamma$ from $D\cup\Delta$ to $D'\cup\Delta$. The distances $d_H$, $d_{GH}$, and $d_B$ are used respectively in hypotheses \textnormal{(H2)} and \textnormal{(H3)} of Section~\ref{sec:topological_criterion} and in the proof of Theorem~\ref{thm:hopf}.

\section{Hopf's Topological Criterion}\label{sec:criterion}

The emergence of a limit cycle after a Hopf bifurcation implies a qualitative change in the geometry of the system's attractor. Before the bifurcation, the trajectories converge toward an equilibrium point, resulting in a point cloud concentrated in a small region of the reconstructed space. After the bifurcation, the dynamics evolve toward a periodic orbit, generating an approximately circular structure in phase space. From a topological perspective, this transition corresponds to the emergence of a nontrivial homology class in dimension one \cite{Ghrist2008,Perea2019}. While no significant persistent cycles are observed before the bifurcation, a dominant class in $H_1$ appears afterward, associated with the presence of the limit cycle. This observation suggests that the Hopf transition can be detected through the emergence of a persistent dominant class in $H_1$. To quantify this phenomenon, we propose a criterion based on persistent homology for detecting Hopf bifurcations directly from observed time series.

\subsection{Setup}

Consider the one-parameter family \eqref{eq:system}. For each $\mu$, let $x(t;\mu)$ be an observed scalar time series. Following the construction \eqref{eq:embedding} of Section~\ref{subsec:reconstruction}, the delay embedding
\begin{equation*}
  \Phi_\mu(t)=\big(x(t;\mu),x(t-\tau;\mu),\ldots,x(t-(m-1)\tau;\mu)\big)\in\mathbb R^m
\end{equation*}
yields, as in \eqref{eq:cloud}, the reconstructed point cloud
\begin{equation*}
X_{\mu}=\{\Phi_{\mu}(t_{k})\}_{k=1}^{N_{\mu}}\subset\mathbb{R}^{m}.
\end{equation*}

Applying the construction of Section~\ref{subsec:homology} to $X_{\mu}$ gives the one-dimensional persistence diagram

\begin{equation}\label{eq:persistence_diagram_mu}
D^{(1)}_{\mu} = \{(b_{i,\mu},\,d_{i,\mu})\}_{i\in I_\mu},
\end{equation}

\begin{remark}[Embedding-parameter selection and comparability across the parameter sweep]
\label{rem:embedding_parameters}
The embedding parameters $(\tau,m)$ are kept fixed throughout each parameter sweep. They may be selected from a representative time series using standard data-driven criteria, such as mutual information for the delay \cite{Fraser1986} and False Nearest Neighbors for the embedding dimension \cite{kennel1992}. Re-estimating $(\tau,m)$ independently at every parameter value would change the reconstruction map itself and could therefore introduce artificial variations in $\mathcal H(\mu)$ that are unrelated to the underlying dynamical transition. Sensitivity to the embedding parameters is instead assessed by repeating the complete parameter sweep for several fixed choices of $(\tau,m)$, as in Figure ~\ref{fig:embedding_sensitivity}. A delay that is too small can flatten the reconstructed orbit toward the diagonal $x_1=\cdots=x_m$, whereas an excessively large delay may reduce the coherence of the reconstructed geometry.
\end{remark}

\subsection{Dominant topological functional}

\begin{definition}[Dominant topological functional]\label{def:dominant_functional}
For a finite point cloud $Y$, let $D^{(1)}(Y)$ denote the persistence diagram in dimension one obtained from the Vietoris--Rips filtration of $Y$. We define
\begin{equation}
 \mathscr H(Y):=\max_{(b,d)\in D^{(1)}(Y)}(d-b),
\end{equation}

with the convention $\mathscr H(Y)=0$ when $D^{(1)}(Y)$ has no off-diagonal points. For the reconstructed point cloud $X_\mu$, we write
\[ \mathcal H(\mu):=\mathscr H(X_\mu). \]
Equivalently, $\mathcal H(\mu)=\max_{(b,d)\in D^{(1)}_\mu}(d-b)$, where $D^{(1)}_\mu=D^{(1)}(X_\mu)$ is the persistence diagram introduced in \eqref{eq:persistence_diagram_mu}. If $\widetilde X_\mu$ denotes a perturbed or noisy reconstruction, we analogously write
\[ \widetilde{\mathcal H}(\mu):=\mathscr H(\widetilde X_\mu). \]
The functional measures the persistence of the most prominent one-dimensional homological feature of the reconstructed data.
\end{definition}

\subsection{Topological detection criterion}
\label{sec:topological_criterion}

We now differentiate the geometric attractor set from the finite reconstructed point cloud on which persistent homology is calculated. This distinction separates the underlying attractor geometry and its sampling density from finite sampling, residual transient, and measurement perturbations.

\medskip
\noindent
\textbf{(H1) (Supercritical attracting-set regime).}

The system undergoes a supercritical Hopf bifurcation at $\mu=\mu_c$ \cite{Kuznetsov2004}. We orient the parameter so that the stable periodic branch exists for $\mu>\mu_c$. In a neighborhood of $\mu_c$, let

\begin{equation*}
A_\mu=
   \begin{cases}
       \{x^*(\mu)\}, & \mu<\mu_c,\\[2mm]
       \Gamma_\mu, & \mu>\mu_c,
   \end{cases}
\end{equation*}

where $x^*(\mu)$ is the stable equilibrium and $\Gamma_\mu$ is the stable periodic orbit. The delay-coordinate map $\Phi_\mu$, with common embedding parameters $(\tau,m)$ throughout the parameter sweep, is assumed to be an embedding on the relevant attracting set in the sense of Takens' theorem \cite{Takens1981}. We write

\begin{equation*}
   M_\mu:=\Phi_\mu(A_\mu),
\end{equation*}

where $\Phi_\mu(A_\mu)$ denotes the image of the attracting set under the corresponding delay-coordinate map.

\medskip
\noindent
\textbf{(H2) (Sampling density on the periodic side).}

For $\mu>\mu_c$, let $P_\mu\subset M_\mu$ be a finite reference sample satisfying

\begin{equation*}
    d_H(P_\mu,M_\mu)\leq\delta_\mu,
\end{equation*}

where $d_H$ denotes the Hausdorff distance \eqref{eq:hausdorff_def}. Let

\begin{equation*}
   R_\mu:=\operatorname{rch}(M_\mu)>0.
\end{equation*}

Following Federer \cite{Federer1959}, the reach $\operatorname{rch}(K)$ of a compact set $K\subset\mathbb{R}^m$ is the supremum of the radii $r>0$ such that every point at distance less than $r$ from $K$ has a unique nearest
point in $K$. We assume

\begin{equation} \label{eq:sampling_density}
  \delta_\mu< \frac{1}{2}\sqrt{\frac{3}{10}}\,R_\mu.
\end{equation}

For $\mu<\mu_c$, we take $P_\mu=M_\mu$, which consists of a single point.

\medskip
\noindent
\textbf{(H3) (Finite-data perturbation).}

Let $\widetilde X_\mu$ denote the point cloud actually reconstructed from the finite observed time series, after transient removal and possible measurement perturbation. We assume

\begin{equation}\label{eq:finite_data_perturbation}
   d_H(P_\mu,\widetilde X_\mu)\leq q_\mu.
\end{equation}

If $a_\mu$ bounds the discrepancy between $P_\mu$ and the corresponding finite noise-free reconstruction, and the scalar observational noise satisfies

\begin{equation*}
   \|\xi\|_\infty\leq\sigma_\mu,
\end{equation*}

then the triangle inequality and the structure of the delay embedding give

\begin{equation}\label{eq:q_noise_bound}
 q_\mu \leq a_\mu+\sqrt{m}\,\sigma_\mu,
\end{equation}

because each of the $m$ coordinates of a delay vector is perturbed in absolute value by at most $\sigma_\mu$.

All homology groups below are computed with coefficients in the field $\mathbb{F}_2=\mathbb{Z}/2\mathbb{Z}$.

\begin{theorem}[Hopf topological criterion: pointwise persistence bounds]
\label{thm:hopf}

Suppose that hypotheses \textnormal{(H1)--(H3)} hold. Then:

\begin{enumerate}
    \item[(i)] for $\mu<\mu_c$,

    \begin{equation} \label{eq:pre_hopf_bound}
      \widetilde{\mathcal H}(\mu)\leq4q_\mu;
    \end{equation}

    \item[(ii)] for $\mu>\mu_c$, define the geometric persistence margin

    \begin{equation} \label{eq:geometric_margin}
       \eta_{\mu}^{\mathrm{geom}} := 2\left(\sqrt{\frac{3}{10}}\,R_\mu-2\delta_\mu\right)>0.      
    \end{equation}

    Then

\begin{equation}\label{eq:post_hopf_bound}
\widetilde{\mathcal H}(\mu) \geq \eta_{\mu}^{\mathrm{geom}}-4q_\mu. 
\end{equation}

In particular, whenever

\begin{equation}\label{eq:detectability_condition}
\eta_{\mu}^{\mathrm{geom}}>4q_\mu,
\end{equation}
the observed persistence diagram contains a nontrivial one-dimensional class whose persistence is bounded away from zero.
\end{enumerate}
\end{theorem}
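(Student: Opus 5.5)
The proof combines the bottleneck stability theorem for Vietoris--Rips filtrations with the elementary fact that the persistence functional $\mathscr H$ is $1$-Lipschitz, up to a factor $2$, with respect to bottleneck distance. The stability theorem says that for finite metric spaces $X,Y$,
\[
d_B\big(D^{(1)}(X),D^{(1)}(Y)\big)\le 2\,d_{GH}(X,Y)\le 2\,d_H(X,Y),
\]
the last inequality holding for subsets of $\mathbb R^m$ as recalled in Section~\ref{subsec:homology}. Now let $\gamma$ be a matching of the two diagrams realizing cost at most $\epsilon$ in the $\|\cdot\|_\infty$ norm. A point $(b,d)$ is then moved to some $(b',d')$ with $|b-b'|,|d-d'|\le\epsilon$, or it is matched to the diagonal, which forces $d-b\le 2\epsilon$. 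In either case the persistence changes by at most $2\epsilon$. This gives
\[
|\mathscr H(X)-\mathscr H(Y)|\le 2\,d_B \le 4\,d_H(X,Y).
\]
It is exactly this factor $4$ that appears in front of $q_\mu$ in both parts of the statement. With this lemma in hand, (H3) yields $|\widetilde{\mathcal H}(\mu)-\mathscr H(P_\mu)|\le 4q_\mu$, so the theorem reduces to evaluating $\mathscr H(P_\mu)$ on each side.

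\textbf{Part (i).} For $\mu<\mu_c$, hypothesis (H2) gives $P_\mu=M_\mu=\{\Phi_\mu(x^*(\mu))\}$, a single point. Its VR filtration has trivial $H_1$ at every scale, so $\mathscr H(P_\mu)=0$. The lemma then gives $\widetilde{\mathcal H}(\mu)\le 4q_\mu$, which is (i).

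\textbf{Part (ii).} For $\mu>\mu_c$ we need the lower bound $\mathscr H(P_\mu)\ge \eta^{\mathrm{geom}}_\mu$ on the reference sample. I would get this from the Vietoris--Rips reconstruction theorem for sets of positive reach: when the sample is $\delta$-dense in $M$ and the scale is suitably small relative to $R=\operatorname{rch}(M)$, $\mathrm{VR}(P,\varepsilon)$ recovers the homotopy type of $M$. Here $M_\mu$ is homeomorphic to a circle, since (H1) makes $\Phi_\mu$ an embedding of $\Gamma_\mu$. The admissible range of scales should be
\[
4\delta_\mu\le\varepsilon\le 2\sqrt{3/10}\,R_\mu .
\]
The upper endpoint is the constant $\sqrt{3/10}$ coming from the Attali--Lieutier--Salinas type bound for Rips complexes, and the range is nonempty precisely because of the sampling condition \eqref{eq:sampling_density}.

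Having a nontrivial $H_1$ at every single scale in this range is not enough on its own. We also need the inclusion maps between scales in the range to be isomorphisms on $H_1\cong\mathbb F_2$. I would obtain this by factoring the inclusions through the offsets $M^{r}$ of $M_\mu$. Each offset deformation-retracts onto $M_\mu$ because $r<R_\mu$, and the nerve/Rips sandwiching between Čech complexes and offsets is natural with respect to these maps. It follows that a single class is born no later than $b\le 4\delta_\mu$ and dies no earlier than $d\ge 2\sqrt{3/10}\,R_\mu$. Its persistence is therefore at least
\[
2\sqrt{3/10}\,R_\mu-4\delta_\mu=\eta^{\mathrm{geom}}_\mu ,
\]
and this is positive by (H2). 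Combining with the lemma gives \eqref{eq:post_hopf_bound}. If moreover $\eta^{\mathrm{geom}}_\mu>4q_\mu$, then $\widetilde{\mathcal H}(\mu)>0$, so $\widetilde X_\mu$ has an off-diagonal class with persistence bounded below by $\eta^{\mathrm{geom}}_\mu-4q_\mu$.

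\textbf{Main obstacle.} The hard step is the second one. It requires pinning down the precise Rips reconstruction statement with constant $\sqrt{3/10}$, and then upgrading its pointwise homotopy equivalences to functoriality across scales, so that one and the same class survives the whole interval. My first attempt would be to cite the Attali--Lieutier--Salinas theorem for a fixed scale. I would then prove persistence of the class separately, by showing that the composite $H_1(\mathrm{VR}(P,\varepsilon))\to H_1(\mathrm{VR}(P,\varepsilon'))$ maps the fundamental cycle of $M_\mu$ to the fundamental cycle, using nearest-point projection onto $M_\mu$. If the exact endpoints $4\delta_\mu$ and $2\sqrt{3/10}\,R_\mu$ do not come out of the cited result, the constants in $\eta^{\mathrm{geom}}_\mu$ would have to be adjusted accordingly.
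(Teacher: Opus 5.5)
Your stability lemma and part (i) are the paper's argument. So is the reduction of (ii) to a lower bound on $\mathscr H(P_\mu)$.

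\textbf{The gap.} It lies in the scale window $4\delta_\mu\le\varepsilon\le 2\sqrt{3/10}\,R_\mu$. You read this window off the statement and attribute $\sqrt{3/10}$ to an Attali--Lieutier--Salinas fixed-scale Rips theorem. That is not where the constant comes from, and the ALS positive-reach statements do not give this window under (H2). As written, $\mathscr H(P_\mu)\ge\eta^{\mathrm{geom}}_\mu$ is asserted rather than proved, as your closing hedge concedes.

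\textbf{How the paper derives the window.}
\begin{enumerate}
\item Niyogi--Smale--Weinberger: $U_\mu(r)=\bigcup_{p\in P_\mu}B(p,r)$ deformation retracts onto $M_\mu$ for $2\delta_\mu<r<\sqrt{3/5}\,R_\mu$.
\item The Persistent Nerve Lemma transfers this to \v{C}ech complexes, compatibly with inclusions.
\item Use the sandwich $\check{C}(P_\mu,r)\subseteq VR(P_\mu,2r)\subseteq\check{C}(P_\mu,\sqrt2\,r)$ with $2\delta_\mu<r_1<r_2<\sqrt{3/10}\,R_\mu$. Then $H_1(VR(P_\mu,2r_1))\to H_1(VR(P_\mu,2r_2))$ is the middle factor of the isomorphism induced by $\check{C}(P_\mu,r_1)\subseteq\check{C}(P_\mu,\sqrt2\,r_2)$.
\end{enumerate}
This is where the endpoints come from: $4\delta_\mu=2\cdot 2\delta_\mu$ and $2\sqrt{3/10}\,R_\mu=2\cdot\sqrt{3/5}\,R_\mu/\sqrt2$.

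\textbf{Your main obstacle is self-imposed.} You never need $VR(P_\mu,\varepsilon)\simeq M_\mu$ at any scale, nor isomorphisms between scales. A single nonzero map $H_1(VR(P_\mu,2r_1))\to H_1(VR(P_\mu,2r_2))$ already forces a bar containing $[2r_1,2r_2]$.

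\textbf{Your offset idea closes the gap.} Made precise, it even improves the constant and needs no Niyogi--Smale--Weinberger. Write $M_\mu^{s}:=\{x:\operatorname{dist}(x,M_\mu)\le s\}$. Then:
\begin{itemize}
\item $M_\mu\subseteq U_\mu(r_1)$ for $r_1>\delta_\mu$;
\item $U_\mu(s)\subseteq M_\mu^{s}$, because $P_\mu\subset M_\mu$;
\item $M_\mu\hookrightarrow M_\mu^{s}$ is a homotopy equivalence for $s<R_\mu$.
\end{itemize}
Hence the composite $H_1(M_\mu)\to H_1(U_\mu(r_1))\to H_1(U_\mu(\sqrt2\,r_2))\to H_1(M_\mu^{\sqrt2 r_2})$ is an isomorphism $\mathbb F_2\to\mathbb F_2$ whenever $\delta_\mu<r_1<r_2<R_\mu/\sqrt2$. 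So the middle map is nonzero. The Persistent Nerve Lemma and the same \v{C}ech--Rips sandwich then give $\mathscr H(P_\mu)\ge\sqrt2\,R_\mu-2\delta_\mu$, which exceeds $\eta^{\mathrm{geom}}_\mu$.

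The step is therefore repairable. But the window has to be derived from such a factorization, not cited.
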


\begin{proof}
We use the same scale convention as in Section \ref{subsec:homology}: $VR(Y,\varepsilon)$ contains a simplex whenever all pairwise distances between its vertices are at most $\varepsilon$, while $\check{C}(Y,r)$ denotes the \v{C}ech complex associated with the cover of $Y$ by Euclidean balls of radius $r$.

With these conventions, the Euclidean \v{C}ech--Vietoris--Rips comparison gives
\begin{equation}\label{eq:cech_rips_interleaving}
  \check{C}(Y,r) \subseteq VR(Y,2r) \subseteq \check{C}(Y,\sqrt{2}\,r).
\end{equation}

Indeed, the first inclusion is immediate, while the second follows from the Euclidean Rips--\v{C}ech comparison of de Silva and Ghrist \cite[Theorem~2.5]{deSilvaGhrist2007}. Their dimension-dependent constant is bounded above by $\sqrt{2}$, which gives the dimension-independent form used in \eqref{eq:cech_rips_interleaving}.

We first consider the case $\mu<\mu_c$. By \textnormal{(H1)}, $P_\mu=M_\mu$ consists of a single point. Therefore its one-dimensional persistence diagram contains no off-diagonal points and

\begin{equation*}
  \mathscr H(P_\mu)=0.
\end{equation*}

Since $P_\mu,\widetilde X_\mu\subset\mathbb R^m$ lie in a common ambient metric space, the Gromov--Hausdorff distance is bounded by the Hausdorff distance \eqref{eq:hausdorff_def}, $d_{GH}(P_\mu,\widetilde X_\mu)\le d_H(P_\mu,\widetilde X_\mu)$. Hypothesis \textnormal{(H3)}, that is \eqref{eq:finite_data_perturbation}, gives $d_H(P_\mu,\widetilde X_\mu)\le q_\mu$, hence $d_{GH}(P_\mu,\widetilde X_\mu)\le q_\mu$. The Gromov--Hausdorff stability theorem for Vietoris--Rips persistence \cite[Theorem~3.1]{Chazal2009}, translated to the scale convention used here, then bounds the bottleneck distance $d_B$ \eqref{eq:bottleneck_def} between the two diagrams as

\begin{equation}  \label{eq:gh_stability}
  d_B\!\left(D^{(1)}(P_\mu), D^{(1)}(\widetilde X_\mu) \right) \leq 2d_{GH}(P_\mu,\widetilde X_\mu) \leq 2q_\mu.
\end{equation}

The factor $2$ in \eqref{eq:gh_stability} comes from the difference in scale convention. Chazal et al.\ \cite{Chazal2009} parametrize their Rips filtration so that simplices have diameter less than twice the filtration parameter, whereas here the filtration parameter $\varepsilon$ is the diameter threshold itself.

If two persistence diagrams are within bottleneck distance $\varepsilon$, then the persistence $d-b$ of two matched off-diagonal points can differ by at most $2\varepsilon$. If a point is matched to the diagonal, its persistence is at most $2\varepsilon$. Consequently,

\begin{equation*}
  \left| \mathscr H(P_\mu)- \widetilde{\mathcal H}(\mu) \right| \leq4q_\mu.
\end{equation*}

Since $\mathscr H(P_\mu)=0$, this proves \eqref{eq:pre_hopf_bound}.

We now consider $\mu>\mu_c$. Then

\begin{equation*}
    M_\mu=\Phi_\mu(\Gamma_\mu)
\end{equation*}

is an embedded simple closed curve. Therefore

\begin{equation*}
   H_1(M_\mu;\mathbb F_2)\cong\mathbb F_2.
\end{equation*}

By \textnormal{(H2)}, $P_\mu$ is a $\delta_\mu$-sample of $M_\mu$. The reconstruction theorem of Niyogi, Smale and Weinberger \cite[Proposition~3.1]{Niyogi2008} implies that, for every radius $r$ satisfying

\begin{equation}\label{eq:reconstruction_interval}
2\delta_\mu<r<\sqrt{\frac{3}{5}}\,R_\mu, 
\end{equation}
the union 
\begin{equation*}
U_\mu(r) := \bigcup_{p\in P_\mu}B(p,r),
\end{equation*}
deformation retracts onto $M_\mu$.

Since Euclidean balls and all their nonempty finite intersections are convex, the covers defining $U_\mu(r)$ are good covers. Moreover, these covers are nested as $r$ increases. The Persistent Nerve Lemma \cite[Lemma~3.4]{ChazalOudot2008} therefore identifies the homology maps of the union-of-balls filtration with those of the associated \v{C}ech filtration, compatibly with the inclusion maps between scales.

More explicitly, if $r\leq r'$ both belong to the interval \eqref{eq:reconstruction_interval}, then $M_\mu\subset U_\mu(r)\subset U_\mu(r')$, and both unions deformation retract onto the same $M_\mu$. Hence the inclusion

\begin{equation*}
  U_\mu(r)\hookrightarrow U_\mu(r')
\end{equation*}

induces an isomorphism on $H_1$. By the Persistent Nerve Lemma, the corresponding inclusion of \v{C}ech complexes induces the same persistent homology map.

By \eqref{eq:sampling_density}, $2\delta_\mu<\sqrt{3/10}\,R_\mu$, so we may choose

\begin{equation*}
   2\delta_\mu<r_1<r_2< \sqrt{\frac{3}{10}}\,R_\mu.
\end{equation*}

Then both $r_1$ and $\sqrt{2}\,r_2$ belong to the reconstruction interval \eqref{eq:reconstruction_interval}, because

\begin{equation*}
 \sqrt{2}\,r_2 < \sqrt{\frac{3}{5}}\,R_\mu.
\end{equation*}

Using \eqref{eq:cech_rips_interleaving}, consider the sequence of inclusions

\begin{equation}\label{eq:cech_rips_sequence}
  \check{C}(P_\mu,r_1) \longrightarrow VR(P_\mu,2r_1) \longrightarrow VR(P_\mu,2r_2) \longrightarrow \check{C}(P_\mu,\sqrt{2}\,r_2).
\end{equation}

The composite map in \eqref{eq:cech_rips_sequence} coincides with the canonical inclusion from $\check{C}(P_\mu,r_1)$ to $\check{C}(P_\mu,\sqrt{2}\,r_2)$. By the reconstruction result and the Persistent Nerve Lemma, this inclusion induces an isomorphism on $H_1(-;\mathbb F_2)$ and is therefore nonzero.

Consequently, the middle map

\begin{equation*}
   H_1\!\left(VR(P_\mu,2r_1);\mathbb F_2\right)\longrightarrow H_1\!\left(VR(P_\mu,2r_2);\mathbb F_2\right)
\end{equation*}

must also be nonzero. Hence the Vietoris--Rips persistence barcode of $P_\mu$ contains an interval whose length is at least

\begin{equation*}
  2(r_2-r_1).
\end{equation*}

Letting $r_1\downarrow2\delta_\mu$ and $r_2\uparrow\sqrt{3/10}\,R_\mu$ gives

\begin{equation} \label{eq:reference_persistence_bound}
  \mathscr H(P_\mu) \geq 2\left(\sqrt{\frac{3}{10}}\,R_\mu-2\delta_\mu \right) = \eta_\mu^{\mathrm{geom}}.
\end{equation}

Finally, hypothesis \textnormal{(H3)} (\eqref{eq:finite_data_perturbation}) and the same Gromov--Hausdorff stability argument used in \eqref{eq:gh_stability} imply

\begin{equation*}
 \left| \widetilde{\mathcal H}(\mu) - \mathscr H(P_\mu) \right| \leq4q_\mu.
\end{equation*}

Combining this inequality with \eqref{eq:reference_persistence_bound} yields

\begin{equation*}
\widetilde{\mathcal H}(\mu) \geq \eta_\mu^{\mathrm{geom}}-4q_\mu,
\end{equation*}

which proves \eqref{eq:post_hopf_bound}.
\end{proof}

\begin{corollary}[Finite-resolution Hopf topological criterion]
\label{cor:finite_resolution}

The pointwise detectability condition \eqref{eq:detectability_condition} need not hold uniformly as $\mu\to\mu_c$. The following statement isolates parameter ranges on which it does.

Fix $0<\Delta<\rho$ and consider the parameter intervals

\begin{equation*}
  K_-=[\mu_c-\rho,\mu_c-\Delta], \qquad K_+=[\mu_c+\Delta,\mu_c+\rho].
\end{equation*}

Define $B_-:= \sup_{\mu\in K_-}4q_\mu$ 
and $B_+:= \inf_{\mu\in K_+} \left[ \eta_\mu^{\mathrm{geom}}-4q_\mu \right].$ 
We assume $B_-<\infty$, which holds in particular whenever $q_\mu$ is bounded on the compact interval $K_-$ (for instance, under the uniform high-probability bound of Corollary~\ref{cor:gaussian_noise}); this boundedness is what allows the threshold $\eta_\Delta$ below to be chosen as a finite real number.

If $B_-<B_+,$ then every threshold satisfying
\begin{equation*}
   \eta_\Delta\in(B_-,B_+)
\end{equation*}
separates the two regimes. More precisely,
\begin{equation*}
 \widetilde{\mathcal H}(\mu)<\eta_\Delta, \qquad \mu\in K_-,
\end{equation*}

whereas

\begin{equation*}
   \widetilde{\mathcal H}(\mu)>\eta_\Delta, \qquad \mu\in K_+.
\end{equation*}

\end{corollary}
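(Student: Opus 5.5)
The plan is to apply Theorem~\ref{thm:hopf} pointwise on each of the two compact intervals and then compare the resulting bounds with the threshold through the definitions of $B_-$ and $B_+$. We need to check that the hypotheses of the theorem hold at every $\mu\in K_-\cup K_+$. Since $\Delta>0$, both intervals stay away from $\mu_c$: every $\mu\in K_-$ satisfies $\mu<\mu_c$, and every $\mu\in K_+$ satisfies $\mu>\mu_c$. We will take $\rho$ small enough that $K_\pm$ lie in the neighbourhood of $\mu_c$ where (H1)--(H3) are assumed. Then parts (i) and (ii) of the theorem apply at each such $\mu$.

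\emph{Pre-bifurcation side.} For $\mu\in K_-$, Theorem~\ref{thm:hopf}(i) gives $\widetilde{\mathcal H}(\mu)\le 4q_\mu$. By the definition of the supremum, $4q_\mu\le B_-$. The assumption $B_-<\infty$ is used only to guarantee that the interval $(B_-,B_+)$ is a nonempty set of finite reals, so that $\eta_\Delta$ exists. Since $\eta_\Delta>B_-$, we obtain the chain
\[
\widetilde{\mathcal H}(\mu)\le 4q_\mu\le B_-<\eta_\Delta .
\]
This inequality is strict, as claimed.

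\emph{Post-bifurcation side.} For $\mu\in K_+$, Theorem~\ref{thm:hopf}(ii) gives $\widetilde{\mathcal H}(\mu)\ge \eta_\mu^{\mathrm{geom}}-4q_\mu$. By the definition of the infimum, $\eta_\mu^{\mathrm{geom}}-4q_\mu\ge B_+$. Since $\eta_\Delta<B_+$, we obtain
\[
\widetilde{\mathcal H}(\mu)\ge B_+>\eta_\Delta .
\]
This is again strict. Strictness holds because $\eta_\Delta$ lies in the \emph{open} interval $(B_-,B_+)$, so no attainment of the infimum or supremum is needed. Note also that $B_+>B_-\ge0$ forces $B_+>0$. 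Hence condition \eqref{eq:detectability_condition} holds uniformly on $K_+$, which is consistent with the theorem.

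The only delicate point is bookkeeping rather than analysis. We must confirm that hypotheses (H2)--(H3) are in force at every parameter value of $K_\pm$, in particular $\delta_\mu<\tfrac12\sqrt{3/10}\,R_\mu$ throughout $K_+$, so that $\eta_\mu^{\mathrm{geom}}$ is well defined and positive there. We must also confirm that the suprema and infima are taken over exactly the parameter values to which the theorem is applied. Once this is checked, the corollary is a direct consequence of the two pointwise bounds. The uniformity that the individual bounds lack near $\mu_c$ is supplied by the hypothesis $B_-<B_+$, which is why $K_\pm$ are required to exclude a neighbourhood of width $\Delta$ around $\mu_c$.
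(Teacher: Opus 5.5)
Your argument is correct and is the intended one. The paper states this corollary without a separate proof, as an immediate consequence of Theorem~\ref{thm:hopf}(i)--(ii) combined with the definitions of $B_\pm$ as a supremum and an infimum, with strictness coming from the open interval $(B_-,B_+)$. One minor framing point: $\rho$ is fixed in the statement rather than chosen small, so the standing assumption is simply that (H1)--(H3) hold on $K_-\cup K_+$, which is needed for $q_\mu$ and $\eta_\mu^{\mathrm{geom}}$ to be defined there at all.
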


\begin{corollary}[Finite-sample Gaussian perturbations]
\label{cor:gaussian_noise}

Suppose the geometric assumptions of Theorem~\ref{thm:hopf} hold and that the observational perturbations are independent Gaussian random variables

\begin{equation*}
 \xi_k\sim\mathcal N(0,s_\mu^2), \qquad k=1,\ldots,n_\mu,
\end{equation*}

where $n_\mu$ denotes the number of scalar observations used to construct the delay vectors. For any $0<\gamma<1$, define

\begin{equation}\label{eq:gaussian_bound}
 t_{\mu,\gamma} = s_\mu \sqrt{2\log\left(\frac{2n_\mu}{\gamma}\right)}.
\end{equation}

Then

\begin{equation}\label{eq:gaussian_probability}
  \mathbb P \left(\max_{1\leq k\leq n_\mu} |\xi_k| \leq t_{\mu,\gamma} \right) \geq 1-\gamma.
\end{equation}

Consequently, if $a_\mu$ denotes the deterministic finite-data contribution to the reconstruction error, then with probability at least $1-\gamma$,

\begin{equation} \label{eq:gaussian_q}
q_\mu \leq a_\mu + \sqrt{m}\,s_\mu\sqrt{ 2\log\left(\frac{2n_\mu}{\gamma}\right)}.
\end{equation}

Thus the conclusions of Theorem~\ref{thm:hopf} hold with the right-hand side of \eqref{eq:gaussian_q} used as a high-probability bound for $q_\mu$.
\end{corollary}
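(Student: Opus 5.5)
The plan is to prove the maximal inequality \eqref{eq:gaussian_probability} with a union bound over the $n_\mu$ scalar observations, and then feed the resulting event into the deterministic noise bound \eqref{eq:q_noise_bound} from hypothesis \textnormal{(H3)}. No new geometric input is needed. On the high-probability event the perturbation satisfies (H3) with an explicit $q_\mu$, and Theorem~\ref{thm:hopf} applies verbatim.

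\emph{Step 1 (single-coordinate tail).} For $\xi\sim\mathcal N(0,s_\mu^2)$ and $t>0$, I will use the standard Gaussian tail bound
\[
\mathbb P(|\xi|>t)\le 2\exp\!\left(-\frac{t^2}{2s_\mu^2}\right).
\]
It follows, for instance, from the Chernoff argument $\mathbb P(\xi>t)\le \inf_{\lambda>0}e^{-\lambda t+\lambda^2 s_\mu^2/2}=e^{-t^2/(2s_\mu^2)}$ together with symmetry. Independence is not actually needed here or below.

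\emph{Step 2 (union bound).} Next apply a union bound over $k=1,\ldots,n_\mu$:
\[
\mathbb P\Big(\max_k|\xi_k|>t\Big)\le 2n_\mu\exp\!\left(-\frac{t^2}{2s_\mu^2}\right).
\]
Substituting $t=t_{\mu,\gamma}$ from \eqref{eq:gaussian_bound} makes the exponential equal to $\gamma/(2n_\mu)$, so the right-hand side is exactly $\gamma$. Passing to the complement gives \eqref{eq:gaussian_probability}.

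\emph{Step 3 (transfer to $q_\mu$).} On the event $E=\{\max_k|\xi_k|\le t_{\mu,\gamma}\}$, the noise sequence satisfies $\|\xi\|_\infty\le t_{\mu,\gamma}$. Hypothesis (H3) then holds with $\sigma_\mu=t_{\mu,\gamma}$. Each delay vector uses only $m$ of the $n_\mu$ observed scalars, and each coordinate moves by at most $t_{\mu,\gamma}$, so each delay vector moves by at most $\sqrt m\,t_{\mu,\gamma}$ in Euclidean norm. The triangle inequality for $d_H$, combined with the deterministic contribution $a_\mu$, then yields \eqref{eq:gaussian_q} via \eqref{eq:q_noise_bound}.

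Finally, on $E$ every hypothesis of Theorem~\ref{thm:hopf} holds with this value of $q_\mu$, because (H1) and (H2) are deterministic and assumed. Therefore both \eqref{eq:pre_hopf_bound} and \eqref{eq:post_hopf_bound} hold with probability at least $1-\gamma$.

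The only step needing care is Step 1, where the constant must be checked: the factor $2$ inside the logarithm must match the two-sided tail with prefactor exactly $2$ and no $1/\sqrt{2\pi}$ gain. The Chernoff form above delivers precisely this, so no obstacle is expected. A secondary bookkeeping point is that $n_\mu$ counts scalar observations rather than delay vectors. The union bound is therefore over scalars, and the $\sqrt m$ factor enters only deterministically.
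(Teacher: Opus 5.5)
Your proposal is correct and follows the paper's proof essentially step for step: the two-sided Gaussian tail bound $2\exp(-t^2/(2s_\mu^2))$, a union bound over the $n_\mu$ scalar observations evaluated at $t=t_{\mu,\gamma}$, and then the coordinatewise $\sqrt{m}$ transfer to delay vectors combined with $a_\mu$ via \eqref{eq:q_noise_bound}. Your extra remarks are accurate refinements rather than a different route: the Chernoff derivation of the tail constant, and the observation that the union bound does not need independence.
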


\begin{proof}
For a centered Gaussian variable,

\begin{equation*}
 \mathbb P(|\xi_k|>t) \leq 2\exp\left( -\frac{t^2}{2s_\mu^2} \right).
\end{equation*}

Applying the union bound over the $n_\mu$ scalar observations and choosing $t=t_{\mu,\gamma}$, as given in \eqref{eq:gaussian_bound}, gives \eqref{eq:gaussian_probability}. On this event, each coordinate of every delay vector is perturbed by at most $t_{\mu,\gamma}$, and therefore the Euclidean perturbation of a delay vector is at most $\sqrt{m}\,t_{\mu,\gamma}$. Combining this term with the deterministic finite-data error $a_\mu$ through \eqref{eq:q_noise_bound} yields \eqref{eq:gaussian_q}.
\end{proof}

\begin{remark}[Finite resolution of the criterion]
\label{rem:finite_resolution}

Theorem~\ref{thm:hopf} is pointwise, whereas Corollary~\ref{cor:finite_resolution} provides a uniform separation only on parameter sets bounded away from $\mu_c$. We therefore do not assert the existence of a fixed positive threshold separating the two regimes for parameter values arbitrarily close to the Hopf point.

In a supercritical Hopf bifurcation, the attracting periodic orbit contracts toward the equilibrium as the critical parameter is approached. Consequently, without a corresponding improvement in sampling and perturbation scales, no uniform positive lower bound for the geometric persistence margin can in general be expected arbitrarily close to $\mu_c$. This finite-resolution limitation is intrinsic to the data-driven detection problem rather than an artifact of persistent homology.
\end{remark}

\begin{remark}[Dependence on the delay embedding]
\label{rem:delay_dependence}

The geometric persistence margin in \eqref{eq:geometric_margin} can be written as

\begin{equation*}
\eta_\mu^{\mathrm{geom}} = 2\left(\sqrt{\frac{3}{10}}\, \operatorname{rch}\!\left(\Phi_\mu(\Gamma_\mu)\right) -2\delta_\mu \right).
\end{equation*}

It therefore depends explicitly on the geometry produced by the chosen delay embedding. In particular, a delay that is too small may flatten the reconstructed periodic orbit toward the diagonal $x_1=\cdots=x_m$, reducing its reach and hence the available geometric persistence margin. Conversely, an excessively large delay may degrade the coherence of the reconstructed orbit, as discussed in Remark~\ref{rem:embedding_parameters}.

For the supercritical Hopf normal form, the physical limit-cycle radius scales as $|\mu-\mu_c|^{1/2}$ \cite{Kuznetsov2004}. However, a corresponding asymptotic law for $\operatorname{rch}(\Phi_\mu(\Gamma_\mu))$ under a general delay-coordinate embedding requires additional geometric regularity assumptions. We therefore do not assert such an asymptotic relation in Theorem~\ref{thm:hopf}.
\end{remark}

\begin{remark}[A degenerate case without a bounded attractor]
It is natural to ask whether the criterion of Theorem~\ref{thm:hopf} could yield a false positive on systems for which the linear part alone already exhibits eigenvalues crossing the imaginary axis, without a genuine nonlinear saturation producing a limit cycle. A canonical such
example is the purely linear planar system $\dot z = (i+\mu)z$, $z=x+iy\in\mathbb{C}$, i.e.
\[  \dot x = \mu x - y, \qquad \dot y = x + \mu y,\]
obtained from the Hopf normal form of Section~\ref{sec:experiments} by removing the cubic term $-z|z|^2$. For $\mu<0$ the origin is a stable focus and every orbit spirals into it, so hypothesis~(H1) is satisfied
and $\mathcal{H}(\mu)$ is small, as in the genuine Hopf case. However, for $\mu>0$ the origin becomes an unstable focus and non-equilibrium orbit spirals outward without bound: the system has no compact attracting set for $\mu>0$, and consequently no time series generated by this system can satisfy hypothesis~(H1) for $\mu>0$, since there is no attracting set $A_\mu$ for the trajectory to remain close to. This example therefore lies outside the scope of Theorem~\ref{thm:hopf} rather than contradicting it: on the unstable side there is no compact attracting periodic orbit to which hypotheses \textnormal{(H1)--(H3)} can be applied. In finite observations, such behavior would be expected to produce an expanding or nonstationary reconstruction rather than a point cloud sampling a compact post-bifurcation attractor. Consequently, the example illustrates the importance of the attracting-set assumption in Theorem~\ref{thm:hopf}, rather than providing a counterexample to the proposed criterion.
\end{remark}

\subsection{Topological estimator of the critical parameter}

Let $\{\mu_{j}\}_{j=1}^{M}$ be a discretization of the parameter interval, so that $\mathcal{H}(\mu_j)$ is available only at these sampled points, and only up to whatever estimation noise is present in a persistence computation from a finite time series (Section~\ref{sec:methodology}). Locating the point of maximal change in $\mathcal H$ through a raw finite difference, $|\mathcal{H}(\mu_{j})-\mathcal{H}(\mu_{j-1})|$, is numerically ill-conditioned: differentiation amplifies whatever sampling and estimation noise is already present in $\mathcal H(\mu_j)$, and a single spurious jump away from $\mu_c$ can dominate the maximum. We therefore define the estimator through a smoothed derivative rather than the raw difference.

\begin{definition}[Topological estimator]\label{def:estimator}
Let $\widehat{\mathcal H}$ denote a smoothed estimate of $\mathcal H$ obtained from $\{(\mu_j,\mathcal H(\mu_j))\}_{j=1}^M$ by local polynomial regression (a Savitzky--Golay filter in the numerical implementation of Section~\ref{sec:methodology}). The critical parameter is estimated by
\begin{equation}
  \widehat{\mu}^{\,\mathrm{TDA}}_{c} = \arg\max_{\mu}
  \left|\frac{d}{d\mu}\widehat{\mathcal H}(\mu)\right|.
\end{equation}
Boundary points of the sampled interval are excluded from the search, since numerical derivatives are unreliable at the edge of a finite domain.
\end{definition}

The derivative-based estimator is motivated by the local growth law of the dominant topological feature. At the population level, the following result shows that the singular onset of the derivative is localized at the same critical parameter $\mu_c$ appearing in Theorem~\ref{thm:hopf}. This idealized consistency should be distinguished from finite-sample consistency, since the numerical estimator of Definition~\ref{def:estimator} acts on a discretized and smoothed estimate of $\mathcal H$.

\begin{proposition}[Consistency of the idealized estimator with Theorem~\ref{thm:hopf}]
\label{prop:consistency}

Let $s\in\{-1,+1\}$ indicate the side of the critical parameter on which the attracting periodic orbit exists, and define

\begin{equation*}
    d(\mu):=s(\mu-\mu_c).
\end{equation*}

Suppose that an idealized continuous-parameter topological functional $\mathcal H_\infty$ satisfies

\begin{equation}
 \mathcal H_\infty(\mu)=0, \qquad d(\mu)\leq0,
  \label{eq:idealized_equilibrium_side}
\end{equation}

and, on the periodic side $d(\mu)>0$,

\begin{equation}
  \mathcal H_\infty(\mu) = \kappa\,d(\mu)^\alpha+r(\mu), \qquad \kappa>0, \qquad 0<\alpha<1,
\label{eq:idealized_birth_law}
\end{equation}

where $r$ is continuously differentiable for $d(\mu)>0$ and satisfies

\begin{equation}
 r(\mu) = o\!\left(d(\mu)^\alpha\right), \qquad r'(\mu) = o\!\left(d(\mu)^{\alpha-1}\right) \quad \text{as } d(\mu)\downarrow0.
  \label{eq:birth_law_remainder}
\end{equation}

Then, approaching the critical parameter from the periodic side,

\begin{equation}
 \left| \mathcal H_\infty'(\mu) \right| \sim \alpha\kappa\, d(\mu)^{\alpha-1} \longrightarrow \infty.
\label{eq:derivative_singularity}
\end{equation}

Hence $\mu_c$ is a one-sided singular onset of the derivative of $\mathcal H_\infty$.
\end{proposition}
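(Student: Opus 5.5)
The plan is a direct computation on the periodic side, $d(\mu)>0$, using the chain rule through $d$. Since $d(\mu)=s(\mu-\mu_c)$ with $s\in\{-1,+1\}$, we have $d'(\mu)=s$. Differentiating \eqref{eq:idealized_birth_law} on the open set $\{d(\mu)>0\}$, which is legitimate because $d(\mu)^\alpha$ is smooth there and $r$ is $C^1$ by assumption, gives
\begin{equation*}
\mathcal H_\infty'(\mu)=s\,\alpha\kappa\, d(\mu)^{\alpha-1}+r'(\mu).
\end{equation*}

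Next, factor out the leading term:
\begin{equation*}
\mathcal H_\infty'(\mu)=s\,\alpha\kappa\, d(\mu)^{\alpha-1}\Bigl(1+\frac{s\,r'(\mu)}{\alpha\kappa\, d(\mu)^{\alpha-1}}\Bigr).
\end{equation*}
By the second condition in \eqref{eq:birth_law_remainder}, the bracketed correction tends to $1$ as $d(\mu)\downarrow0$, since $\alpha\kappa>0$ is a fixed constant. Taking absolute values, $|s|=1$ gives
\begin{equation*}
|\mathcal H_\infty'(\mu)|/(\alpha\kappa\, d(\mu)^{\alpha-1})\to1,
\end{equation*}
which is the asymptotic equivalence in \eqref{eq:derivative_singularity}. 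Because $\alpha-1<0$, $d(\mu)^{\alpha-1}\to\infty$ as $d(\mu)\downarrow0$, so $|\mathcal H_\infty'(\mu)|\to\infty$.

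Finally, we interpret the result as a one-sided singular onset at $\mu_c$. On the equilibrium side, \eqref{eq:idealized_equilibrium_side} makes $\mathcal H_\infty\equiv0$, so its derivative vanishes on the open set $d(\mu)<0$. The one-sided difference quotient at $\mu_c$ from the periodic side is
\begin{equation*}
\frac{\mathcal H_\infty(\mu)}{\mu-\mu_c}=s\,\frac{\kappa d^\alpha+r}{d}=s\,d^{\alpha-1}\bigl(\kappa+o(1)\bigr),
\end{equation*}
using the first condition in \eqref{eq:birth_law_remainder}. This quotient diverges in absolute value, so the one-sided derivative at $\mu_c$ is infinite. Hence $\mu_c$ separates a region where the derivative is identically zero from a region where it is unbounded near $\mu_c$.

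There is no real obstacle here. The only point needing care is that only the \emph{derivative} remainder condition $r'=o(d^{\alpha-1})$ controls $\mathcal H_\infty'$; the condition $r=o(d^\alpha)$ alone would not suffice, and it is used only for the difference quotient at $\mu_c$ and for the continuity $\mathcal H_\infty(\mu)\to0$.
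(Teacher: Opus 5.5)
Your proof is correct and takes essentially the same approach as the paper. You differentiate the birth law via $d'(\mu)=s$, absorb $r'(\mu)=o(d(\mu)^{\alpha-1})$ into a $(1+o(1))$ factor, take absolute values, use $\alpha-1<0$, and note that $\mathcal H_\infty'$ vanishes on the equilibrium side. Your extra difference-quotient computation at $\mu_c$ is valid (it uses $r=o(d^\alpha)$ and $\mathcal H_\infty(\mu_c)=0$) and is a small strengthening the paper does not spell out: it shows the one-sided derivative at $\mu_c$ itself is infinite.
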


\begin{proof}

For $d(\mu)>0$, differentiating \eqref{eq:idealized_birth_law} gives

\begin{equation*}
 \mathcal H_\infty'(\mu) = s\alpha\kappa\, d(\mu)^{\alpha-1} + r'(\mu),
\end{equation*}

because $d'(\mu)=s$. By \eqref{eq:birth_law_remainder},

\begin{equation*}
   r'(\mu) = o\!\left(d(\mu)^{\alpha-1}\right),
\end{equation*}

and therefore

\begin{equation*}
\mathcal H_\infty'(\mu) = s\alpha\kappa\, d(\mu)^{\alpha-1} \left(1+o(1)\right).
\end{equation*}

Taking absolute values yields

\begin{equation*}
  \left| \mathcal H_\infty'(\mu) \right| = \alpha\kappa\, d(\mu)^{\alpha-1} \left(1+o(1)\right).
\end{equation*}

Since $0<\alpha<1$, we have $\alpha-1<0$, and hence

\begin{equation*}
   d(\mu)^{\alpha-1} \longrightarrow \infty \qquad \text{as } d(\mu)\downarrow0.
\end{equation*}

This proves \eqref{eq:derivative_singularity}. On the equilibrium side, $\mathcal H_\infty$ is identically zero under \eqref{eq:idealized_equilibrium_side}, so its derivative vanishes away from $\mu_c$.
\end{proof}

\begin{remark}
For the Hopf normal form of Case~A, the hypotheses \eqref{eq:idealized_birth_law}--\eqref{eq:birth_law_remainder} of Proposition~\ref{prop:consistency} hold in the idealized limit with remainder $r\equiv0$: by Proposition~\ref{prop:normal_form_lyapunov} below, $\mathcal H_\infty(\mu)=\kappa\sqrt{\mu}$ exactly on the periodic side (\eqref{eq:H_normal_form_scaling}), so \eqref{eq:idealized_birth_law} holds with $\alpha=1/2$ and no correction term, and \eqref{eq:birth_law_remainder} holds trivially. This is consistent with the near-unit $R^2$ obtained in the numerical birth-law fit of Figure~\ref{fig:prop1_birthlaw}.
\end{remark}

For a generic supercritical Hopf bifurcation, the amplitude of the attracting periodic orbit scales as $|\mu-\mu_c|^{1/2}$ near the critical parameter \cite{Kuznetsov2004}. In particular, for the Hopf normal form considered in Case~A, the limit-cycle radius satisfies the square-root scaling law exactly at leading order. Since the embedding parameters are held fixed and Euclidean rescaling multiplies all Vietoris--Rips filtration scales, and hence all persistence lifetimes, by the same factor, this provides the motivation for the idealized exponent $\alpha=1/2$ in Proposition~\ref{prop:consistency}.

This scaling prediction is examined numerically in Figure~\ref{fig:prop1_birthlaw}. On the periodic side, fitting $\mathcal H(\mu)=\kappa\mu^\alpha$ over $\mu\in(0.02,0.5)$ gives $\widehat{\alpha}=0.5000$ and $\widehat{\kappa}=1.7225$ (values rounded to four decimal places, consistent with the precision reported throughout this section), with $R^2\simeq1$. Thus, in the canonical Hopf normal form, the persistence functional recovers numerically the square-root onset predicted by the local normal-form geometry.

The proposition concerns the idealized continuous-parameter functional $\mathcal H_\infty$. It does not imply that the maximizer of the derivative of a smoothed finite-grid estimate is exactly equal to $\mu_c$. In the computational implementation, parameter discretization, finite time-series sampling, residual transients, delay reconstruction, persistence estimation, and smoothing can all affect the location of the numerical maximum of $|\widehat{\mathcal H}'(\mu)|$.

\begin{figure}[!ht]
\centering
\includegraphics[width=0.8\textwidth]{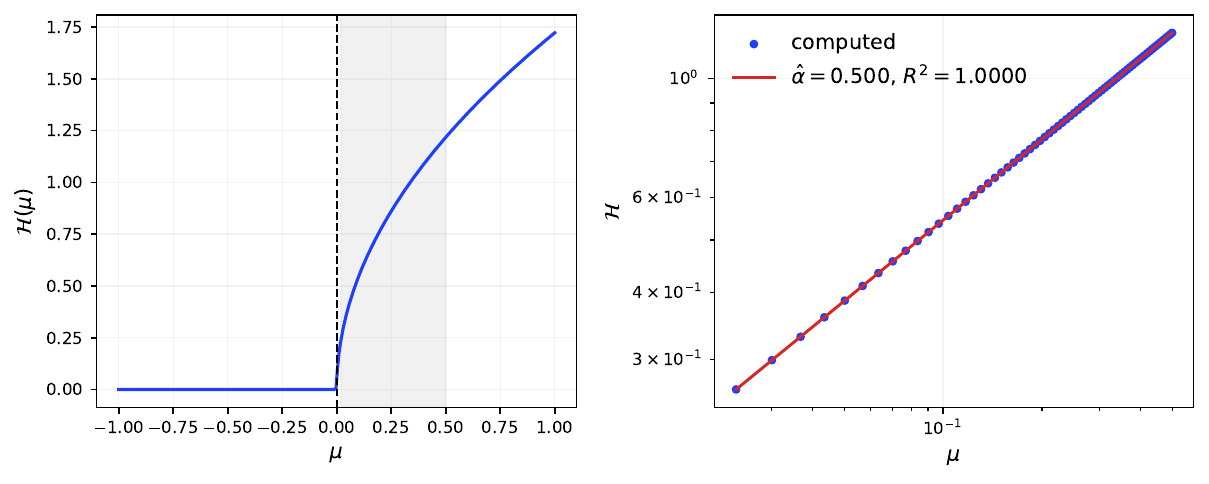}
\caption{
Empirical verification of the one-sided birth law in Proposition~\ref{prop:consistency} for the Hopf normal form. On the periodic side, the dominant persistence functional follows closely a power law $\mathcal H(\mu)=\kappa\mu^\alpha$. A log--log fit over $\mu\in(0.02,0.5)$ yields $\widehat{\alpha}\simeq0.5000$, in agreement with the square-root scaling predicted by the normal-form geometry. The figure supports the  population-level birth-law assumption and should not be interpreted as a finite-grid consistency result for the critical-parameter estimator.
}
\label{fig:prop1_birthlaw}
\end{figure}

Proposition~\ref{prop:consistency} establishes localization only at the idealized continuous-parameter level: under the assumed one-sided birth law, the singular onset of $\left|\mathcal H_\infty'(\mu)\right|$ occurs at the same critical parameter $\mu_c$ that separates the equilibrium and periodic regimes in Theorem~\ref{thm:hopf}. This population-level statement should not be interpreted as an exact finite-grid or finite-sample localization theorem.

For the Hopf normal form, the final numerical experiment, whose computational setup is detailed in Section~\ref{sec:methodology} and Section~\ref{sec:experiments}, uses $M=300$ uniformly spaced parameter values on $[-1,1]$ and yields 

\begin{equation}
    \widehat{\mu}^{\,\mathrm{TDA}}_c =0.016722,
    \qquad
    \left|\widehat{\mu}^{\,\mathrm{TDA}}_c-\mu_c\right| =0.016722.
    \label{eq:hopf_final_estimate}
\end{equation}

A useful control calculation isolates the role of finite parameter resolution and smoothing. Applying exactly the same parameter grid, Savitzky--Golay rule, numerical differentiation, and boundary exclusion to the idealized square-root onset $\mathcal H_{\mathrm{ideal}}(\mu) =\widehat{\kappa}\sqrt{\max(\mu,0)}$ produces the same numerical maximizer, $\widehat{\mu}^{\,\mathrm{TDA}}_c=0.016722$. Thus, in this canonical experiment, the observed positive displacement of the derivative maximum is reproduced by the combined action of the finite parameter grid and the regularization of the square-root singularity. This observation does not constitute a general finite-sample error formula: finite observation time, residual transients, delay reconstruction, sampling density, and persistent homology estimation can contribute additional discrepancies in more general systems. The dependence on parameter-grid resolution, temporal sampling, and smoothing bandwidth is quantified numerically in the finite-resolution study of Case~A.

The one-sided birth law is an explicit assumption of Proposition~\ref{prop:consistency}, and its conclusion should not be transferred automatically to systems for which the corresponding local scaling has not been established. The Lorenz experiment, in particular, lies outside the attracting periodic orbit assumptions of Theorem~\ref{thm:hopf}, because the classical Hopf bifurcation of its nonzero equilibria is subcritical. Its numerical localization is therefore interpreted as empirical detection of a strong topological reorganization rather than as a direct application of the theorem.

The BZ system provides a complementary orientation of the supercritical scenario: in the parameter convention used there, the attracting oscillatory branch lies on the $b<b_c$ side. This orientation is
compatible with the sign parameter $s$ introduced in Proposition~\ref{prop:consistency}. Nevertheless, no general square-root law for the reconstructed topological functional is assumed for the BZ experiment, and its critical-parameter localization is reported directly from the numerical estimator.

The estimator $\widehat{\mu}^{\,\mathrm{TDA}}_{c}$ therefore identifies the sampled parameter value at which the smoothed dominant topological functional changes most rapidly. When the hypotheses of Theorem~\ref{thm:hopf} and the local birth law of Proposition~\ref{prop:consistency} are satisfied, this provides a principled mechanism for localizing the Hopf transition within the finite resolution of the numerical procedure. Outside this regime, $\widehat{\mu}^{\,\mathrm{TDA}}_{c}$ should instead be interpreted as the location of the strongest observed change in the dominant one-dimensional persistence functional, rather than as a certification that the detected transition is necessarily a Hopf bifurcation. The procedure remains data-driven: it uses the observed time series, fixed delay coordinate reconstructions, and the corresponding persistence diagrams, without requiring the  governing equations or a spectral analysis. Robustness to finite-data perturbations is quantified by Theorem~\ref{thm:hopf}, Corollary~\ref{cor:gaussian_noise}, and the finite-resolution separation criterion of Corollary~\ref{cor:finite_resolution}.

\section{Computational Implementation}
\label{sec:methodology}

The computational implementation follows the same general pipeline for all systems considered in this work. For each value of the control parameter, a scalar time series is generated or observed, a fixed
delay-coordinate reconstruction is constructed, one-dimensional Vietoris--Rips persistent homology is computed, and the dominant topological functional $\mathcal H$ is evaluated. The resulting parameter-dependent functional is subsequently smoothed and differentiated according to Definition~\ref{def:estimator} to obtain the numerical estimate of the transition parameter. All dynamical systems were integrated using a fourth-order Runge--Kutta scheme. Delay-coordinate reconstructions were implemented explicitly from the scalar observations, and persistent homology was computed using \texttt{ripser}. All Vietoris--Rips computations use Euclidean distances, consistently with the geometric setting of Theorem~\ref{thm:hopf}. The same numerical pipeline was used throughout the final experiments unless a numerical parameter was intentionally varied as part of a sensitivity analysis.

\subsection{Time-series generation and delay reconstruction}
\label{subsec:timeseries}

For each value $\theta_j$ of the control parameter, where $\theta=\mu$, $\rho$, or $b$ depending on the system, the corresponding ordinary differential equation was integrated using the classical fourth-order Runge--Kutta method on a uniform temporal grid. A scalar observable $x(t;\theta_j)$ was  retained after removal of the prescribed initial transient.

The phase-space reconstruction was formed explicitly as

\begin{equation*}
 X_{\theta_j} = \left\{\bigl(x_k, x_{k-\tau},\ldots, x_{k-(m-1)\tau}\bigr)\right\},
\end{equation*}

which discretizes the delay embedding \eqref{eq:embedding} and instantiates the point cloud construction \eqref{eq:cloud} on the sampled time series, where $\tau$ is measured in samples. When indicated, a fixed row stride was subsequently applied to reduce the number of embedded points.

For each dynamical system, the embedding parameters $(\tau,m)$ were selected once from representative time series using mutual information \cite{Fraser1986} and False Nearest Neighbors \cite{kennel1992} as initial diagnostics, and were then held fixed throughout the complete parameter sweep. This avoids introducing parameter-dependent changes in the reconstruction map itself. The parameter grids were generated directly as uniformly spaced numerical arrays and were not rounded prior to simulation, reconstruction, or persistent-homology computation. The final baseline numerical configurations are summarized in Table~\ref{tab:computational_settings}.

\begin{table}[!ht]
\centering
\caption{Baseline numerical settings used in the final parameter sweeps. $T_{\mathrm{keep}}$ denotes the physical duration of the retained post-transient segment.}
\label{tab:computational_settings}
\scriptsize
\begin{tabular}{lcccccccc}
\toprule
System & Parameter range & $M$ & $T_{\mathrm{total}}$ & $f_s$ & $T_{\mathrm{keep}}$ & $\tau$ & $m$ & stride \\
\midrule
Hopf & $[-1,1]$ & 300 & 800 & 100 & 20 & 26 & 2 & 2 \\
Lorenz & $[20,30]$ & 300 & 100 & 130 & 15.38 & 16 & 2 & 2 \\
BZ & $[2,5]$ & 300 & 6000 & 100 & 40 & 57 & 2 & 5 \\
\bottomrule
\end{tabular}
\end{table}

The initial conditions were $(0,1.01)$ for the Hopf normal form, $(6.74,6.74,22.74)$ for the Lorenz system, and $(2,5.01)$ for the BZ model. The comparatively long integration horizons used for the Hopf and BZ systems were introduced to control slow transient relaxation close to their respective critical parameters; the corresponding diagnostics are discussed with the numerical experiments in Cases~A and~C.

\paragraph{Gaussian-noise experiments.}
When noise robustness was evaluated, the observed scalar series was generated as

\begin{equation*}
   \widetilde{x}_k = x_k+\nu s_x Z_k, \qquad Z_k\overset{\mathrm{iid}}{\sim}\mathcal N(0,1),
\end{equation*}

where $s_x$ denotes the empirical standard deviation of the corresponding noise-free time series and $\nu\geq0$ is the relative noise level. We use $\nu$ rather than $\sigma$ for the noise amplitude in order to distinguish it from the parameter $\sigma$ of the Lorenz system. Thus, in the notation of Corollary~\ref{cor:gaussian_noise}, the standard deviation of the observational perturbation is $s_\mu=\nu s_x$. 

Gaussian perturbations are not uniformly bounded almost surely, whereas hypothesis~\textnormal{(H3)} of Theorem~\ref{thm:hopf} is deterministic. Corollary~\ref{cor:gaussian_noise} connects the finite Gaussian-noise experiments with the deterministic stability result through a high-probability bound on the maximum observational perturbation.

For reproducibility, deterministic but independent random seeds were used across distinct parameter values. At a fixed parameter value, the same standard-normal realization was rescaled when comparing different positive values of $\nu$, yielding paired comparisons across noise amplitudes. One realization was used for each parameter and noise-level configuration. Consequently, the numerical experiments quantify stability across the tested noise levels and realizations rather than constituting a statistical characterization over an ensemble of noise realizations.

\subsection{Numerical estimator}
\label{subsec:numerical_estimator}

For each reconstructed point cloud $X_{\theta_j}$, one-dimensional Vietoris--Rips persistent homology was computed using \texttt{ripser} with the Euclidean metric. Only finite one-dimensional persistence intervals, $D_{\theta_j}^{(1)} = \{(b_{i,j},d_{i,j})\}_{i\in I_j}$, were retained for the numerical summaries. 

The principal observable is the numerical instantiation of the dominant topological functional of Definition~\ref{def:dominant_functional},

\begin{equation}
 \mathcal H(\theta_j) = \max_{i\in I_j}(d_{i,j}-b_{i,j}),
 \label{eq:numerical_H}
\end{equation}

with the convention $\mathcal H(\theta_j)=0$ when no finite off-diagonal $H_1$ class is present.

As a complementary aggregate summary, we also consider the $L^1$ norm of the one-dimensional Betti curve,

\begin{equation}
 \|\beta_1\|_{L^1}(\theta_j)=\int_0^\infty \beta_1(\varepsilon;\theta_j)\,d\varepsilon=\sum_{i\in I_j}(d_{i,j}-b_{i,j}).
 \label{eq:betti_l1}
\end{equation}

Thus, $\mathcal H$ records only the persistence of the dominant one-dimensional class, whereas $\|\beta_1\|_{L^1}$ aggregates the lifetimes of all finite $H_1$ classes. The latter is used only as a complementary topological descriptor and does not enter the critical-parameter estimator. For the estimator of Definition~\ref{def:estimator}, the sampled sequence $\mathcal H(\theta_j)$ was smoothed using a Savitzky--Golay filter of polynomial order $3$. The baseline window length was selected as the nearest valid odd integer to $5\%$ of the number of parameter values $M$, with a minimum window of five points. For the baseline experiments with $M=300$, this rule gives a window length of $15$ points.

The derivative of the smoothed functional was evaluated numerically using the actual parameter coordinates, $\widehat{\mathcal H}'(\theta_j) \approx \frac{d\widehat{\mathcal H}}{d\theta}(\theta_j)$. To reduce endpoint effects, half of the Savitzky--Golay window was excluded from each side of the parameter interval before searching for the derivative maximum. Thus, for the baseline window of $15$ points, seven parameter values were excluded at each boundary. The estimate was then defined by

\begin{equation}
 \widehat{\theta}^{\,\mathrm{TDA}}_c = \theta_{j^\ast}, \qquad j^\ast = \underset{j\in\mathcal J_{\mathrm{int}}}{\arg\max} \left| \widehat{\mathcal H}'(\theta_j) \right|.
 \label{eq:numerical_estimator}
\end{equation}

The same smoothing and boundary rule was used for all three baseline systems and for the finite-resolution experiments, except when the smoothing bandwidth itself was intentionally varied as part of the sensitivity analysis.

Throughout Section~\ref{sec:experiments}, numerical values of $\widehat\theta^{\,\mathrm{TDA}}_c$ are reported to six decimal places for reproducibility of the computational pipeline. This displayed precision is substantially finer than the true localization accuracy, which is governed instead by the finite-resolution bounds of Theorem~\ref{thm:hopf} and Corollary~\ref{cor:finite_resolution} and by the parameter-grid spacings reported in Table~\ref{tab:computational_settings} and Table~\ref{tab:resolution_sensitivity}; the number of reported digits should not be read as a claim about the accuracy of the localization itself.

\subsection{Implementation Algorithm}

The Algorithm \ref{algorithm} summarizes the computational implementation of the proposed topological criterion and provides a practical procedure for detecting Hopf bifurcations from time series data:

\begin{algorithm}[H]
\caption{Topological Detection of Hopf Bifurcations: Derivative-Based Topological Estimator}
\label{algorithm}
\begin{algorithmic}

\STATE \textbf{Input:}
ordered parameter grid
$\{\mu_j\}_{j=1}^{M}$;
family of observed time series
$\{x_{\mu_j}(t)\}_{j=1}^{M}$;
fixed embedding parameters $(\tau,m)$ and row stride;
smoothing parameters.

\STATE \textbf{Output:}
estimated topological transition parameter
$\widehat{\mu}^{\,\mathrm{TDA}}_c$.

\STATE \texttt{/* Topological feature extraction */}

\FOR{$j=1,\ldots,M$}

   \STATE $X_{\mu_j} \leftarrow$ delay-coordinate embedding of $x_{\mu_j}(t)$ using the same $(\tau,m)$ and row stride throughout the parameter sweep

    \STATE
    $D^{(1)}_{\mu_j}
    \leftarrow$
    one-dimensional Euclidean Vietoris--Rips persistence diagram of $X_{\mu_j}$

    \IF{$D^{(1)}_{\mu_j}$ contains a finite off-diagonal point}
        \STATE
        $\mathcal H(\mu_j)
        \leftarrow
        \displaystyle
        \max_{(b,d)\in D^{(1)}_{\mu_j}}(d-b)$
    \ELSE
        \STATE
        $\mathcal H(\mu_j)\leftarrow0$
    \ENDIF

\ENDFOR

\STATE \texttt{/* Smoothing and differentiation */}

\STATE
$\widehat{\mathcal H}
\leftarrow$
Savitzky--Golay smoothing of
$\{(\mu_j,\mathcal H(\mu_j))\}_{j=1}^{M}$

\STATE $\widehat{\mathcal H}'\leftarrow$ numerical derivative of $\widehat{\mathcal H}$ using the actual parameter grid $\{\mu_j\}_{j=1}^{M}$
\STATE $w\leftarrow$ Savitzky--Golay window length
\STATE $r\leftarrow\lfloor w/2\rfloor$
\STATE $\mathcal J_{\mathrm{int}} \leftarrow \{1+r,\ldots,M-r\}$
\STATE \texttt{/* Critical-transition localization */}

\STATE $j^\ast \leftarrow \displaystyle \arg\max_{j\in\mathcal J_{\mathrm{int}}} \left| \widehat{\mathcal H}'(\mu_j) \right|$

\STATE
$\widehat{\mu}^{\,\mathrm{TDA}}_c \leftarrow \mu_{j^\ast}$

\STATE \textbf{return}
$\widehat{\mu}^{\,\mathrm{TDA}}_c$

\end{algorithmic}
\end{algorithm}

Algorithm~\ref{algorithm} is the numerical implementation of Definition~\ref{def:estimator}, computing the estimator \eqref{eq:numerical_estimator} from the discretized functional \eqref{eq:numerical_H}. It should be distinguished from the finite-resolution threshold statement of Corollary~\ref{cor:finite_resolution}. Theorem~\ref{thm:hopf} and Corollary~\ref{cor:finite_resolution} provide geometric detectability bounds for the supercritical attracting-cycle regime, whereas Algorithm~\ref{algorithm} provides a practical localization rule based on the point of steepest change of the observed topological functional. Under the hypotheses of Proposition~\ref{prop:consistency}, these two descriptions are centered at the same critical parameter at the idealized population level. Outside that regime, the output of Algorithm~\ref{algorithm} should be interpreted as the location of the strongest observed topological transition.

\paragraph{Computational Complexity.}
Let $N$ denote the number of points in each reconstructed time series and $M$ the number of parameter values. For fixed embedding dimension, the delay-coordinate reconstruction has linear cost in $N$. Persistent homology is the computationally dominant step. Even when only $H_1$ is required, a complete Vietoris--Rips $2$-skeleton may contain $O(N^2)$ edges and $O(N^3)$ triangles, and the cost of persistence reduction depends strongly on the geometry of the data and on implementation details.

Modern implementations such as \texttt{ripser} use cohomology-based computations, clearing, and related optimizations that substantially reduce the computational burden in practice \cite{Bauer2021}. We therefore do not assign a universal $O(N^2)$ complexity to the persistent-homology step. Since persistence is computed independently for each of the $M$ parameter values, the overall computational cost is $M$ times the cost of a single persistence computation, in addition to the comparatively inexpensive reconstruction, smoothing, and differentiation steps.

The code, notebooks, processed numerical outputs, and figure-generation scripts used in this work are publicly available at \url{https://github.com/JhonathanBarrios21/hopf-tda}.

\section{Experiments and Results}
\label{sec:experiments}

In this section, we examine the proposed topological functional and derivative-based localization procedure on three dynamical systems with distinct roles. Case~A considers the supercritical Hopf normal form, which provides the canonical setting corresponding to the theoretical mechanism developed in Section~\ref{sec:criterion}. Case~B considers the classical Lorenz system as an empirical stress test outside the attracting-periodic-orbit assumptions of the main theorem. Case~C considers a reduced BZ model with an attracting oscillatory branch on the opposite side of the chosen parameter orientation. We additionally examine finite-resolution sensitivity and compare the topological functional descriptively with Lyapunov and Floquet quantities. Each case follows the time-series generation and reconstruction procedure of Section~\ref{subsec:timeseries} and the smoothing and localization procedure of Section~\ref{subsec:numerical_estimator}, with the system-specific parameters summarized in Table~\ref{tab:computational_settings}.

\subsection*{Case A: Normal form of Hopf bifurcation}

As a first experiment for the validation of the proposed topological criterion, the normal form of the supercritical Hopf bifurcation was considered, given by the system:

\[
\dot{x} = \mu x - \omega y - x(x^2 + y^2),
\qquad
\dot{y}=\omega x + \mu y - y(x^2+y^2);
\]

where $\mu$ is the control parameter and $\omega > 0$ represents the angular frequency of the system. This system exhibits a Hopf bifurcation at the critical value $\mu_c = 0$. For $\mu < 0$, the origin is a stable equilibrium, while for $\mu > 0$, a stable limit cycle emerges. This example allows us to evaluate, in a controlled environment, whether the criterion based on persistent homology is capable of correctly detecting the transition between a stationary regime and an oscillatory regime from time series.

The parameter interval $\mu\in[-1,1]$ was sampled at $M=300$ uniformly spaced values. For each value of $\mu$, the system was integrated with $\omega=6$, sampling frequency $f_s=100$, and total integration time $T_{\mathrm{total}}=800$. The last $T_{\mathrm{keep}}=20$ time units were retained for the analysis. The scalar observable $x(t)$ was reconstructed using fixed delay parameters $\tau=26$ samples and $m=2$, with row stride $2$. One-dimensional Euclidean Vietoris--Rips persistent homology was then computed and the dominant functional $\mathcal H(\mu)$ was evaluated.

The comparatively long integration horizon was used to control the critical slowing down of trajectories on the equilibrium side of the bifurcation. In polar coordinates, the radial dynamics are $\dot r=r(\mu-r^2)$,  so for $\mu<0$ and $|\mu|\ll1$ the asymptotic relaxation rate toward the origin is of order $|\mu|$. Consequently, the corresponding relaxation time increases as the critical parameter is approached. A transient diagnostic showed that the residual precritical persistence decreased systematically when the integration horizon was increased from $T_{\mathrm{total}}=100$ to $800$. We therefore adopted $T_{\mathrm{total}}=800$ for the final Hopf calculations in order to reduce finite-transient contamination near $\mu_c$, rather than tuning the trajectory length to the critical-parameter estimate.

\subsection*{Results}

Figure~\ref{fig:hopf_pd} illustrates the change in reconstructed topology across representative values of the control parameter. Away from the critical region on the equilibrium side, no persistent one-dimensional structure is present. As the bifurcation is approached and the attracting periodic orbit develops for $\mu>0$, a dominant finite $H_1$ class emerges and its persistence increases with the size of the reconstructed cycle. The diagrams provide a geometric illustration of this transition, while quantitative comparisons across parameter values are made through the scalar functional $\mathcal H(\mu)$.

\begin{figure}[!ht]
\centering
\includegraphics[width=0.8\textwidth]{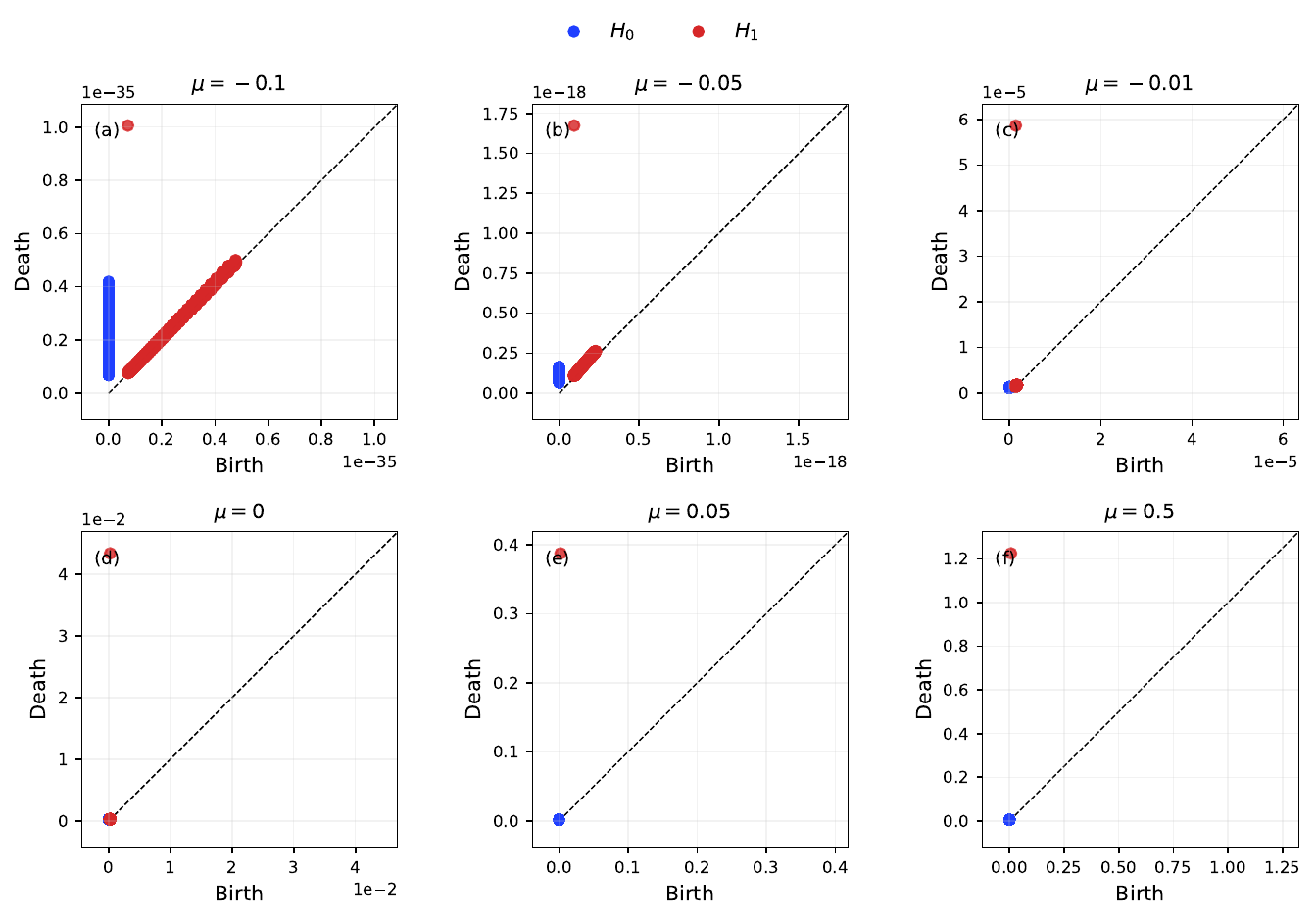}
\caption{Representative persistence diagrams for the Hopf normal form at $\mu=-0.10,-0.05,-0.01,0,0.05$, and $0.50$. The equilibrium side contains no robust persistent $H_1$ class, whereas a dominant one-dimensional feature develops on the periodic side as the attracting limit cycle grows. Axes are independently scaled in each panel because the characteristic persistence scales vary by several orders of magnitude across the bifurcation. In particular, for $\mu$ well below $\mu_c$ the plotted persistence values are of order $10^{-30}$--$10^{-2}$; at this scale the points reflect floating-point roundoff in the numerical pipeline rather than genuine topological structure, consistent with the theoretical prediction $\mathscr H(P_\mu)=0$ for $\mu<\mu_c$ in Theorem~\ref{thm:hopf}. Distances from the diagonal should therefore be interpreted within each panel rather than compared visually across panels; quantitative comparisons across parameter values are performed using $\mathcal H(\mu)$.
}
\label{fig:hopf_pd}
\end{figure}

The same transition is quantified by the topological observables in Figure~\ref{fig:hopf_functionals}. The primary quantity, $\mathcal H(\mu)=\max(d-b)$, measures the persistence of the dominant one-dimensional class. On the equilibrium side it remains negligible away from the immediate critical neighborhood, whereas for $\mu>0$ it increases according to the growth of the reconstructed periodic orbit. As a complementary descriptor, we consider the integrated one-dimensional Betti curve $\|\beta_1\|_{L^1}$, defined in \eqref{eq:betti_l1}. Unlike $\mathcal H$, this quantity aggregates all finite $H_1$ lifetimes. Its evolution is therefore complementary rather than an independent statistical validation of the dominant-persistence functional. In the Hopf normal form, a single dominant $H_1$ class accounts for most of the total persistence on the periodic side, so the two curves in Figure~\ref{fig:hopf_functionals} closely track one another; this near-coincidence reflects the comparative topological simplicity of this system and need not hold for the richer reconstructions considered in Cases~B and~C below.

\begin{figure}[!ht]
\centering
\includegraphics[width=0.65\textwidth]{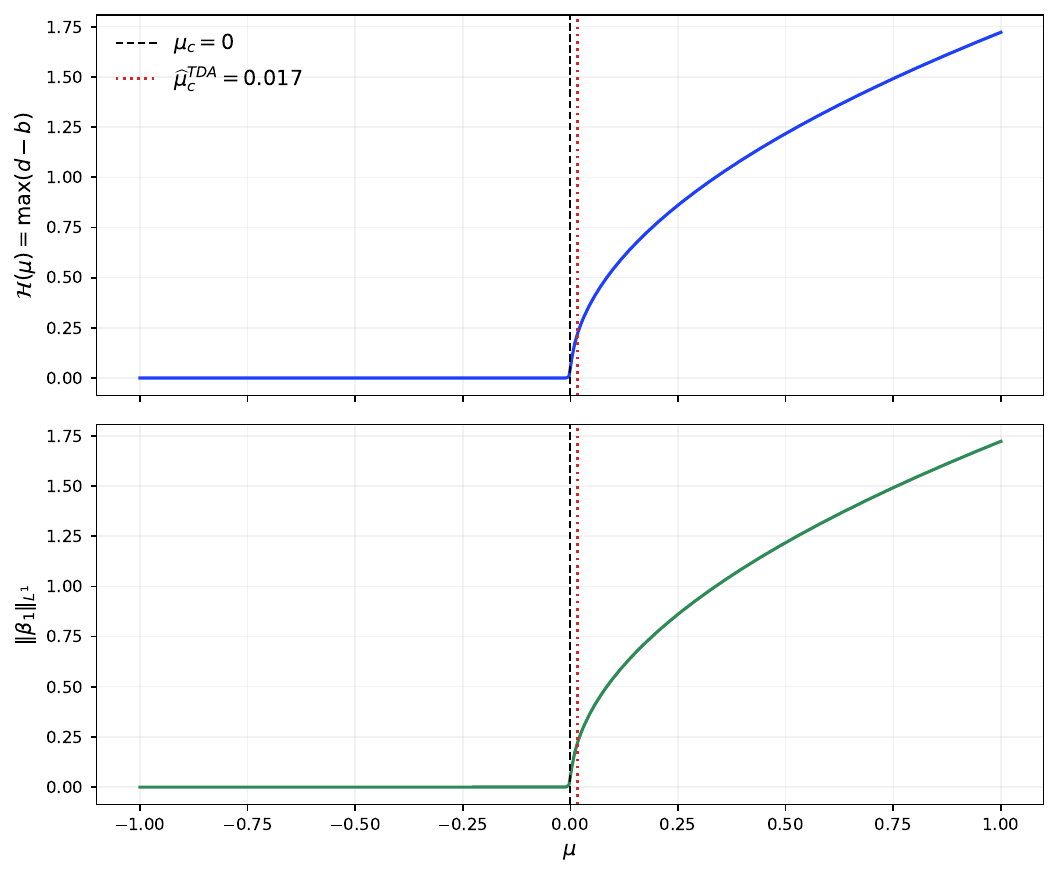}
\caption{ Topological observables for the Hopf normal form. The dominant persistence functional $\mathcal H(\mu)$ records the lifetime of the most persistent finite $H_1$ class, whereas $\|\beta_1\|_{L^1}$ is the integrated one-dimensional Betti curve, equivalently the sum of all finite $H_1$ persistence lifetimes. Both quantities reflect the transition from the equilibrium regime to the attracting periodic regime, although they summarize different aspects of the persistence diagram. }
\label{fig:hopf_functionals}
\end{figure}

Applying the topological estimator of Definition~\ref{def:estimator} to the baseline parameter grid yields $\widehat{\mu}^{\,\mathrm{TDA}}_c=0.016722$,  compared with the analytical critical value $\mu_c=0$. The absolute localization error is therefore $0.016722$. This value should be interpreted as a finite-resolution localization of the transition rather than as an exact finite-grid recovery of the theoretical Hopf point.
Figure~\ref{fig:hopf_estimator} shows the smoothed dominant persistence functional and the magnitude of its numerical derivative used by the estimator. The positive displacement of the derivative maximum from $\mu_c$ is consistent with the finite-grid and smoothing effects discussed after Proposition~\ref{prop:consistency}. Its dependence on the numerical resolution is examined explicitly below.

\begin{figure}[!ht]
\centering
\includegraphics[width=0.65\textwidth] {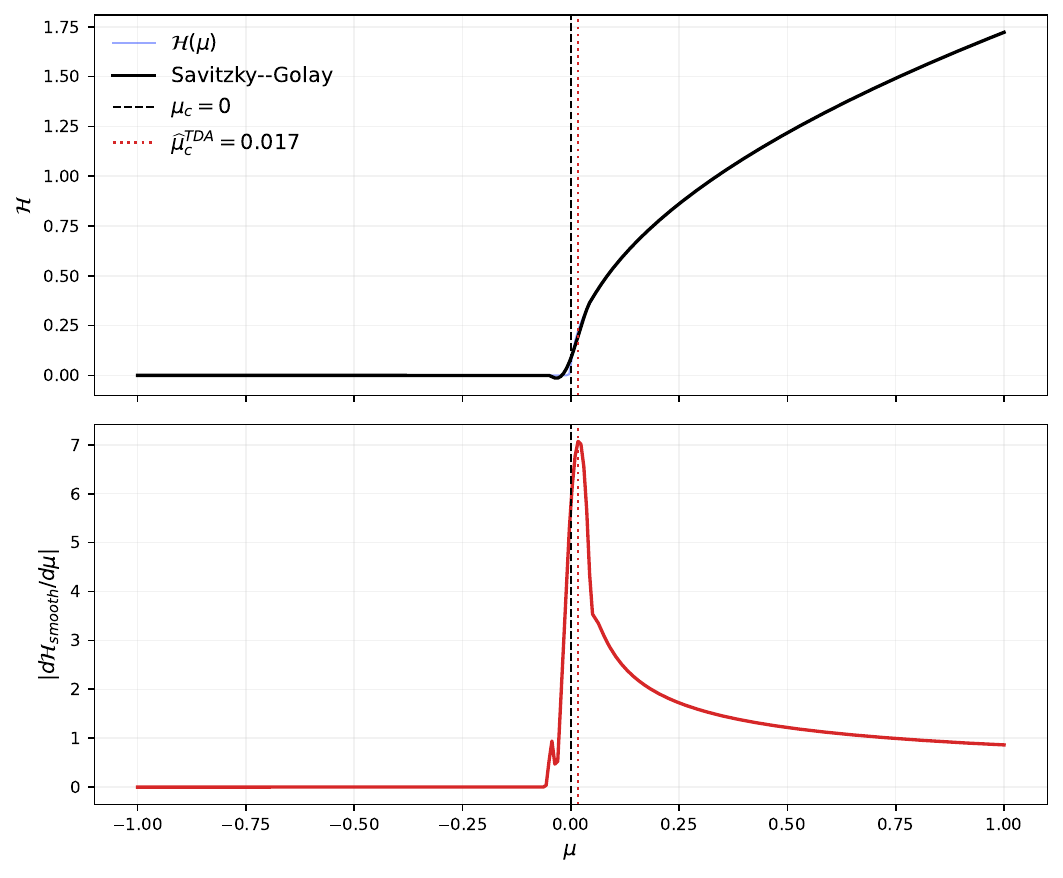}
\caption{Derivative-based localization of the Hopf transition. The dominant persistence functional is smoothed using the baseline Savitzky--Golay rule and differentiated with respect to the actual parameter grid. The reference value $\mu_c=0$ and the numerical estimate $\widehat{\mu}^{\,\mathrm{TDA}}_c=0.016722$ are indicated. The finite positive displacement of the derivative maximum is examined further in the finite-resolution sensitivity analysis.}
\label{fig:hopf_estimator}
\end{figure}

Because the Hopf normal form provides a controlled setting, we also examined sensitivity to the fixed reconstruction parameters. Figure~\ref{fig:embedding_sensitivity} shows that the absolute persistence scale depends on the selected delay and embedding dimension, as expected because these parameters change the geometry of the reconstructed point cloud. Nevertheless, the qualitative one-sided transition of $\mathcal H(\mu)$ is preserved across all tested fixed choices. This supports the use of a common reconstruction map throughout each parameter sweep and avoids introducing parameter-dependent changes in the embedding itself.

\begin{figure}[!ht]
\centering
\includegraphics[width=0.85\textwidth]
{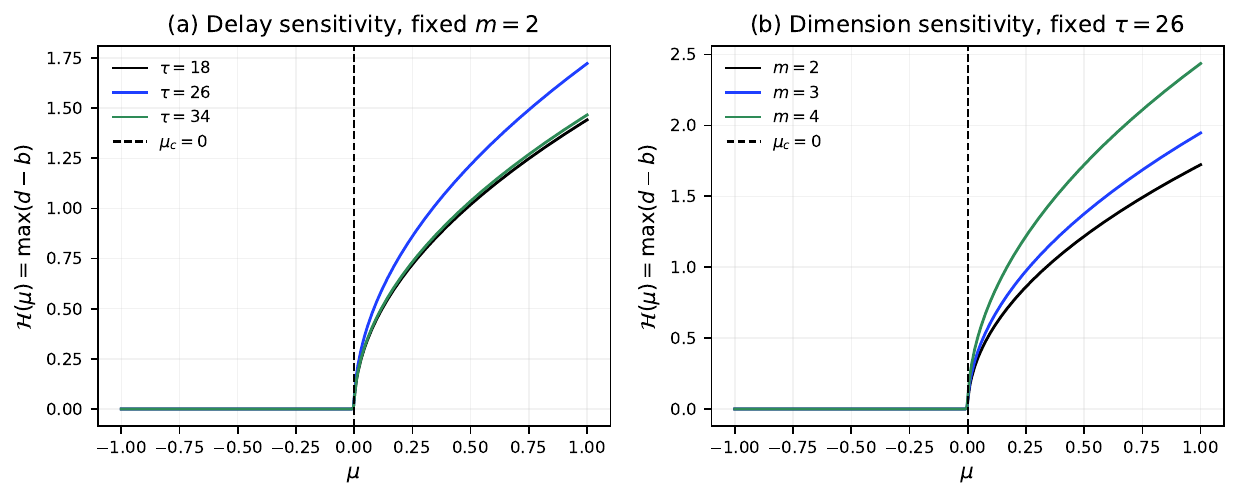}
\caption{ Sensitivity of the dominant topological functional $\mathcal H(\mu)$ to fixed choices of the delay-embedding parameters in the Hopf normal form. (a) Delay sensitivity for $\tau\in\{18,26,34\}$ with fixed embedding dimension $m=2$. (b) Dimension sensitivity for $m\in\{2,3,4\}$ with fixed delay $\tau=26$. Although the magnitude of $\mathcal H$ depends on the reconstruction parameters, the qualitative one-sided transition and its location near the reference value $\mu_c=0$ are preserved across the tested choices. Each pair $(\tau,m)$ is held fixed throughout its complete parameter sweep. }
\label{fig:embedding_sensitivity}
\end{figure}

\paragraph{Finite-resolution sensitivity.}
Because the Hopf normal form has the analytically known critical value $\mu_c=0$, it provides a controlled setting for examining the numerical resolution of the derivative-based estimator. We separately varied the parameter-grid resolution, the Savitzky--Golay smoothing bandwidth, and the temporal sampling frequency. These calculations are interpreted as a finite-resolution sensitivity analysis rather than as a formal finite-sample convergence study.
Table~\ref{tab:resolution_sensitivity} summarizes the resulting parameter-grid, smoothing, and temporal-sampling sensitivity analyses.

\begin{table}[!ht]
\centering
\caption{ Finite-resolution sensitivity of the Hopf critical-parameter estimator. Each panel varies one numerical component while keeping the remaining baseline choices fixed. In the temporal-sampling experiment, the physical delay was fixed at $\tau_{\mathrm{physical}}=0.26$ and the retained observation horizon at $20$ time units.}
\label{tab:resolution_sensitivity}
\scriptsize
\begin{tabular}{ccccc}
\multicolumn{5}{c}{\textbf{(a) Parameter-grid sensitivity}}\\
\toprule
$M$ & $\Delta\mu$ & SG window &
$\widehat{\mu}^{\,\mathrm{TDA}}_c$ &
absolute error\\
\midrule
100 & 0.020202 & 5  & 0.010101 & 0.010101\\
200 & 0.010050 & 11 & 0.025126 & 0.025126\\
300 & 0.006689 & 15 & 0.016722 & 0.016722\\
600 & 0.003339 & 31 & 0.018364 & 0.018364\\
\bottomrule
\end{tabular}
\vspace{0.35cm}

\begin{tabular}{cccc}
\multicolumn{4}{c}{\textbf{(b) Smoothing sensitivity, $M=300$}}\\
\toprule
SG fraction & SG window &
$\widehat{\mu}^{\,\mathrm{TDA}}_c$ &
absolute error\\
\midrule
0.025 & 7  & 0.010033 & 0.010033\\
0.050 & 15 & 0.016722 & 0.016722\\
0.075 & 23 & 0.030100 & 0.030100\\
0.100 & 31 & 0.043478 & 0.043478\\
\bottomrule
\end{tabular}

\vspace{0.35cm}

\begin{tabular}{cccccc}
\multicolumn{6}{c}{\textbf{(c) Temporal-sampling sensitivity}}\\
\toprule
$f_s$ & $\Delta t$ &
$\tau$ (samples) & embedded points &
$\widehat{\mu}^{\,\mathrm{TDA}}_c$ &
absolute error\\
\midrule
50  & 0.020000 & 13 & 494  & 0.016722 & 0.016722\\
100 & 0.010000 & 26 & 987  & 0.016722 & 0.016722\\
150 & 0.006667 & 39 & 1481 & 0.016722 & 0.016722\\
\bottomrule
\end{tabular}
\end{table}

The grid experiment does not show monotone convergence of the estimate toward zero as $M$ increases. This is not unexpected because the baseline Savitzky--Golay window is defined as approximately $5\%$ of the parameter grid, so grid refinement simultaneously changes the number of smoothing points while maintaining a comparable smoothing scale in parameter space. The result should therefore be interpreted as stability under parameter-grid refinement at approximately fixed relative smoothing, not as a convergence-rate experiment.

The smoothing study shows a clearer systematic effect. Increasing the Savitzky--Golay window shifts the derivative maximum progressively toward positive values, from approximately $0.0100$ at a $2.5\%$ window to approximately $0.0435$ at a $10\%$ window. Thus the smoothing bandwidth is a material finite-resolution parameter, although it is not assumed to be the only possible source of localization error in general data.

Finally, when the temporal sampling frequency is varied from $50$ to $150$ while keeping the physical delay $\tau_{\mathrm{physical}}=0.26$ and the retained physical observation horizon fixed, the estimated value remains $\widehat{\mu}^{\,\mathrm{TDA}}_c=0.016722$ in all three cases. Within the tested range, the estimator is therefore insensitive to this change in temporal discretization once the physical reconstruction scale is held fixed.

\subsection*{Case B: Lorenz System}
As a second experiment, we consider the Lorenz system as a more challenging test of the topological functional in a higher-dimensional nonlinear setting. The system is

\[
\dot{x}=\sigma(y-x),
\qquad
\dot{y}=x(\rho-z)-y,
\qquad
\dot{z}=xy-\beta z,
\]

where $\sigma$, $\rho$, and $\beta$ are positive parameters. We use the classical values $\sigma=10$, $\beta=\frac{8}{3}$ and treat $\rho$ as the control parameter \cite{Lorenz1963}. For these values, the nonzero equilibria lose stability through a Hopf bifurcation at

\begin{equation}
  \rho_c=\frac{\sigma(\sigma+\beta+3)} {\sigma-\beta-1} = 24.736842\ldots .
 \label{eq:lorenz_hopf}
\end{equation}

The Lorenz system exhibits substantially more complex reconstructed geometry than the Hopf normal form, making it useful for assessing the behavior of the proposed topological observable beyond the canonical attracting-cycle setting.

For the classical parameter values $\sigma=10$ and $\beta=8/3$, the Hopf bifurcation of the nonzero equilibria is subcritical \cite{Hassard1981}. Consequently, the small periodic branch associated locally with the Hopf point is unstable, and the Lorenz experiment does not satisfy the attracting-periodic-orbit hypothesis~\textnormal{(H1)} of Theorem~\ref{thm:hopf}. We retain this case deliberately as an empirical stress test of the topological functional in a more complex dynamical setting. Its numerical localization near $\rho_c$ should therefore be interpreted as empirical detection of a strong geometric reorganization near the loss of equilibrium stability, rather than as a direct validation of Theorem~\ref{thm:hopf}.

The parameter interval $\rho\in[20,30]$ was sampled at $M=300$ uniformly spaced values. For each parameter value, the system was integrated for $T_{\mathrm{total}}=100$ with sampling frequency $f_s=130$. The last $2000$ scalar observations, corresponding to a retained nominal duration 

\[T_{\mathrm{keep}}=\frac{2000}{130}=15.384615, \]

were used for the topological analysis. The initial condition was $(6.74,6.74,22.74)$. Delay-coordinate reconstruction used $\tau=16$ samples, embedding dimension $m=2$, and row stride $2$.
The dominant persistence functional was evaluated for relative Gaussian noise levels $\nu\in\{0,0.04,0.10\}$, using the noise model described in Section~\ref{sec:methodology}.

Unlike the Hopf normal form and BZ experiments, no explicit transient relaxation diagnostic of the type introduced below for the BZ reaction (Eqs.~\eqref{eq:bz_relaxation_rate}--\eqref{eq:bz_relaxation_time}) was performed for the Lorenz system, and the same integration horizon $T_{\mathrm{total}}=100$ was used across the full parameter sweep. Because the Lorenz experiment is retained precisely as an empirical stress test lying outside the hypotheses of Theorem~\ref{thm:hopf}, rather than as a quantitative validation of it, this choice was not tuned to a relaxation-time estimate near $\rho_c$; a dedicated near-critical transient analysis for the Lorenz system, analogous to the one carried out below for the BZ reaction, is left for future work.

\subsection*{Results}

Figure~\ref{fig:lorenz_H_noise} shows the dominant persistence functional across the parameter sweep. For parameter values below the reference Hopf point, $\mathcal H(\rho)$ remains comparatively small. A pronounced increase occurs in the neighborhood of $\rho_c=24.736842\ldots$ (\eqref{eq:lorenz_hopf}), followed by substantially larger and more irregular persistence values as the reconstructed dynamics become more geometrically complex.

The same qualitative reorganization is visible for all three tested noise amplitudes. Measurement noise changes the detailed magnitude of the persistence curves, but the strongest topological change remains localized close to the reference transition within the tested configurations. Because one reproducible realization was used for each parameter and noise-level configuration, this result is interpreted as stability across the tested noise levels and realizations rather than as a general statistical robustness statement.

\begin{figure}[!ht]
\centering
\includegraphics[width=0.75\textwidth] {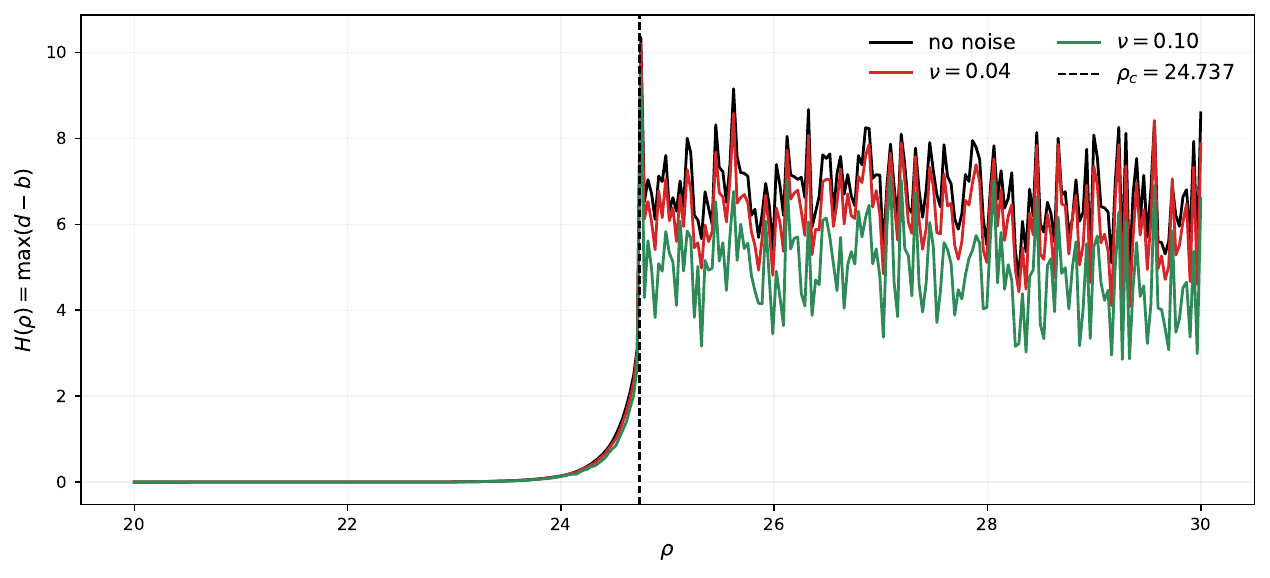}
\caption{ Dominant one-dimensional persistence functional $\mathcal H(\rho)=\max(d-b)$ for the Lorenz system under the tested relative Gaussian-noise levels $\nu\in\{0,0.04,0.10\}$. The vertical dashed line indicates the reference Hopf value $\rho_c=24.736842\ldots$. The curves exhibit a pronounced topological reorganization in the neighborhood of the loss of equilibrium stability. Since the classical Lorenz Hopf bifurcation is subcritical, this experiment is interpreted as an empirical stress test rather than as a direct application of Theorem~\ref{thm:hopf}. }
\label{fig:lorenz_H_noise}
\end{figure}

Applying Definition~\ref{def:estimator} gives

\begin{equation*}
\widehat{\rho}^{\,\mathrm{TDA}}_c =
\begin{cases}
24.648829, & \nu=0,\\
24.615385, & \nu=0.04,\\
24.615385, & \nu=0.10.
\end{cases}
\end{equation*}

Relative to the reference value $\rho_c=24.736842$, the corresponding absolute errors are $0.088013$, $0.121457$, and $0.121457$, respectively. The relative errors are approximately $0.356\%$, $0.491\%$, and $0.491\%$. Thus, all three tested configurations localize the strongest topological change within $0.5\%$ of the reference parameter.

Figure~\ref{fig:lorenz_estimator} makes explicit the operational localization step of the proposed procedure. The modest displacement of the estimated parameter under measurement noise indicates that the location of the strongest persistence change is relatively stable within the tested setup. This numerical behavior does not extend the theoretical guarantee of Theorem~\ref{thm:hopf} to the Lorenz system.

\begin{figure}[!ht]
\centering
\includegraphics[width=0.65\textwidth]{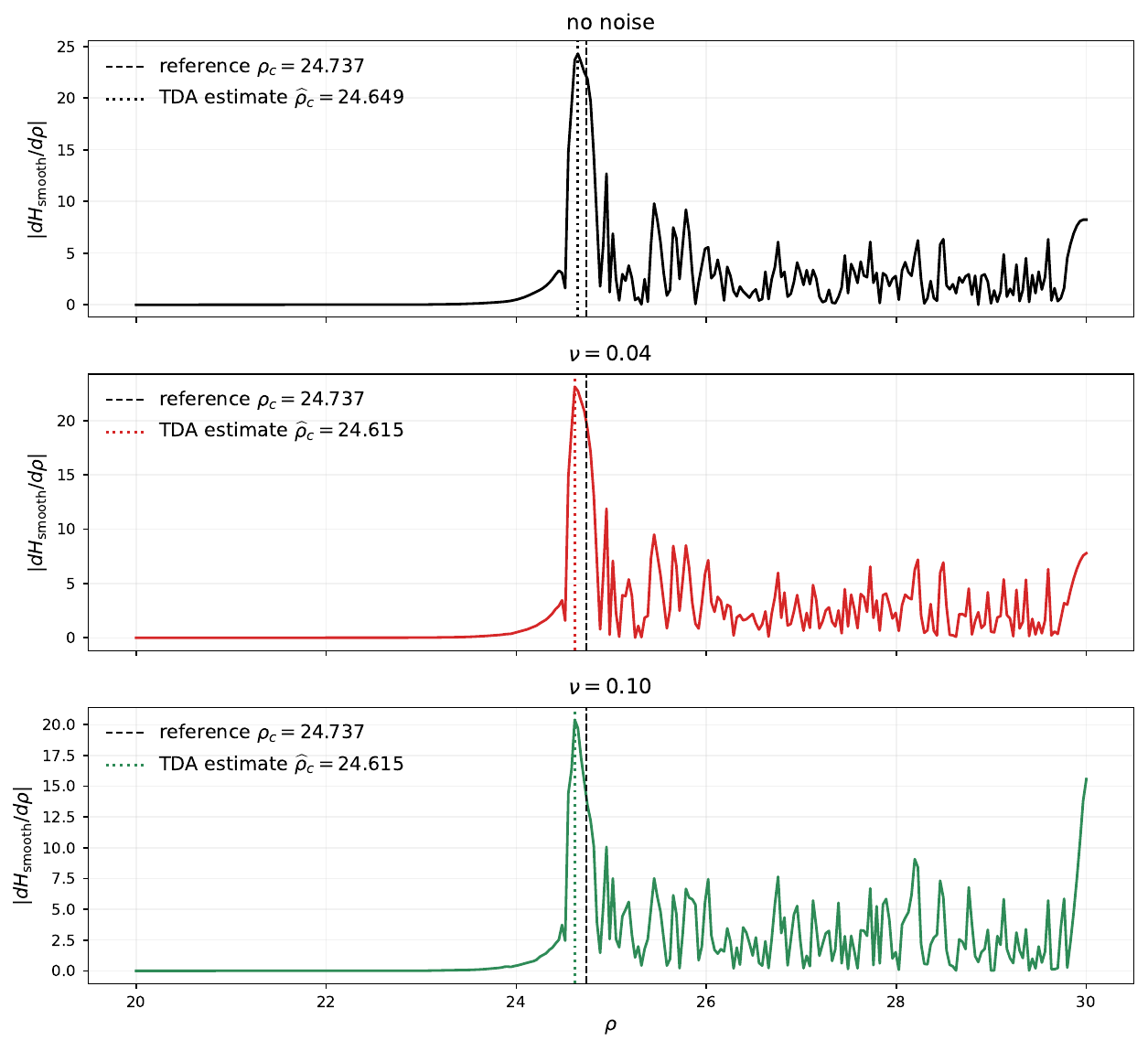}
\caption{ Derivative-based localization of the dominant topological change in the Lorenz system for the tested noise levels. The estimator selects the interior parameter value maximizing $|\widehat{\mathcal H}'(\rho)|$ after application of the common Savitzky--Golay smoothing and boundary-exclusion rules. The resulting estimates remain close to the reference value $\rho_c=24.736842\ldots$ across the tested configurations. }
\label{fig:lorenz_estimator}
\end{figure}

\subsection*{Case C: Belousov-Zhabotinsky Reaction}
As a final experiment to further assess the generality of the proposed topological criterion, we consider the Belousov--Zhabotinsky (BZ) reaction, which provides an example of oscillatory chemical dynamics exhibiting a transition that differs qualitatively from the cases analyzed previously. The BZ reaction is considerably complex, involving more than twenty elementary steps. However, many of these steps quickly reach equilibrium, allowing the complete kinetics to be reduced to a system. Its simplified mathematical model is:

\[
\dot{x} = a - x - \frac{4xy}{1 + x^2},
\qquad
\dot{y} = bx \left( 1 - \frac{y}{1 + x^2} \right);
\]

where $x$ and $y$ are dimensionless concentrations and $a,b>0$ are model parameters. Throughout this experiment we fix $a=10$ and use $b$ as the control parameter.

\begin{remark}[Linear stability and the reference value $b_c$]
\label{rem:bz_stability}

For $a=10$, the system has an equilibrium in the positive quadrant at $(x^*,y^*)=(2,5)$. The Jacobian evaluated at this equilibrium has

\begin{equation*}
    \operatorname{tr}J(b)
    =
    \frac{7}{5}-\frac{2}{5}b,
    \qquad
    \det J(b)=2b.
\end{equation*}

Hence the linear stability crossing occurs at

\begin{equation*}
    b_c=\frac{7}{2}=3.5,
\end{equation*}

where

\begin{equation*}
    \operatorname{tr}J(b_c)=0,
    \qquad
    \det J(b_c)=7>0.
\end{equation*}

The eigenvalues are therefore purely imaginary at $b_c$, and the crossing is transversal because

\begin{equation*}
    \frac{d}{db}\operatorname{tr}J(b)=-\frac{2}{5}\neq0.
\end{equation*}

We use $b_c=3.5$ as the reference Hopf value in the numerical analysis. The simulations exhibit an attracting periodic branch for $b<b_c$, which contracts toward the equilibrium as $b$ approaches $b_c$ from below. Thus, after reversing the parameter orientation, the observed branch is consistent with the supercritical attracting-cycle scenario considered in Theorem~\ref{thm:hopf}.
\end{remark}

In the parameter orientation used here (Remark~\ref{rem:bz_stability}), the oscillatory regime occurs for $b<b_c$, while the equilibrium is stable for $b>b_c$. Thus the dominant one-dimensional topological feature disappears, rather than appears, as $b$ is increased through the transition. Since the estimator uses the magnitude $|\widehat{\mathcal H}'(b)|$, its operational definition is invariant under this reversal of parameter orientation.

The interval $b\in[2,5]$ was sampled at $M=300$ uniformly spaced values. For each value of $b$, the system was integrated with sampling frequency $f_s=100$ and total integration time $T_{\mathrm{total}}=6000$. The last $T_{\mathrm{keep}}=40$ time units were retained for the topological analysis. The initial condition was $(2,5.01)$. Delay-coordinate reconstruction used $\tau=57$ samples, embedding dimension $m=2$, and row stride $5$. The dominant persistence functional was evaluated for relative Gaussian-noise levels $\nu\in\{0,0.02,0.04,0.06\}$,  using the noise model of Section~\ref{sec:methodology}.

The long integration horizon is essential in this experiment. The initial condition $(2,5.01)$ lies very close to the equilibrium $(2,5)$, and for $b<b_c$ close to the Hopf point the departure from this unstable equilibrium is extremely slow. The real part of the linearized eigenvalue pair is

\begin{equation}
    \operatorname{Re}\lambda(b)=\frac{1}{2}\operatorname{tr}J(b) = 0.7-0.2b = 0.2(3.5-b).
    \label{eq:bz_relaxation_rate}
\end{equation}

Thus the characteristic near-critical time scale behaves as

\begin{equation}
    \tau_{\mathrm{relax}} \sim \frac{1}{|\operatorname{Re}\lambda(b)|} = \frac{5}{|3.5-b|}.
    \label{eq:bz_relaxation_time}
\end{equation}

For the sampled value immediately below the critical point, $b\simeq3.49498$, this gives $\tau_{\mathrm{relax}}\simeq997$. A horizon of approximately six such time scales is therefore about $5980$, motivating the choice $T_{\mathrm{total}}=6000$ used in the final computation.

This transient control is important because shorter simulations can mistake slow departure from the near-critical equilibrium for the asymptotic dynamics. In preliminary calculations with $T_{\mathrm{total}}=100$, this effect produced a substantially displaced topological transition. The final integration horizon was therefore selected from the dynamical relaxation scale rather than by tuning the simulation to reproduce the known value $b_c=3.5$.

\subsection*{Results}
Figure~\ref{fig:bz_H_noise} shows the dominant persistence functional across the final BZ parameter sweep. For $b<b_c$, the reconstructed oscillatory regime produces a pronounced finite $H_1$ class and therefore positive values of $\mathcal H(b)$. As the parameter approaches $b_c$ from below, the persistence of this dominant cyclic feature decreases rapidly. Immediately beyond the reference transition, the post-transient trajectory converges to the stable equilibrium and the finite one-dimensional persistence collapses to values close to zero.

The qualitative transition is preserved across the tested noise amplitudes. As in the Lorenz experiment, these calculations use one reproducible realization for each parameter and noise-level configuration, so the result is interpreted as stability across the tested noise levels and realizations rather than as a general statistical robustness statement.

\begin{figure}[!ht]
\centering
\includegraphics[width=0.75\textwidth]{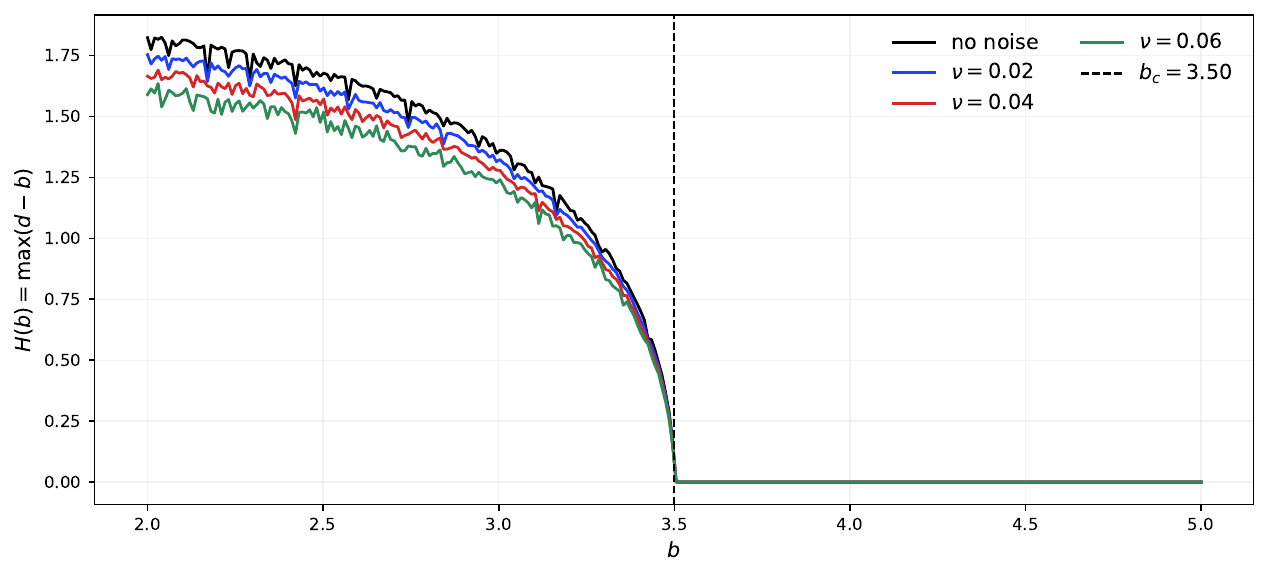}
\caption{Dominant one-dimensional persistence functional
$\mathcal H(b)=\max(d-b)$ for the BZ system under the tested relative Gaussian-noise levels $\nu\in\{0,0.02,0.04,0.06\}$. The vertical dashed line indicates the reference value $b_c=3.5$. The dominant cyclic persistence decreases as the attracting oscillatory branch contracts toward the transition and becomes negligible on the post-transition equilibrium side.}
\label{fig:bz_H_noise}
\end{figure}

Applying Definition~\ref{def:estimator} to the final BZ simulations gives $\widehat b_c^{\,\mathrm{TDA}}=3.474916$ for each of the tested noise levels $\nu\in\{0,0.02,0.04,0.06\}$.
Relative to the reference value $b_c=3.5$, the absolute error is $0.025084$ corresponding to a relative localization error of approximately $0.7167\%$. This result differs substantially from the preliminary estimate obtained with a much shorter integration horizon. The corrected value follows after controlling the slow near-critical transient described above, showing that the earlier large displacement was primarily a finite-transient artifact rather than evidence of an intrinsic bias of the topological estimator.

Figure~\ref{fig:bz_estimator} also illustrates why the absolute derivative in Definition~\ref{def:estimator} is useful for both orientations of the transition. In Case~A the dominant cyclic persistence is born as the control parameter increases, whereas here it disappears. The estimator depends on the magnitude of the topological change and is therefore insensitive to the sign of this orientation.

\begin{figure}[!ht]
\centering
\includegraphics[width=0.7\textwidth]{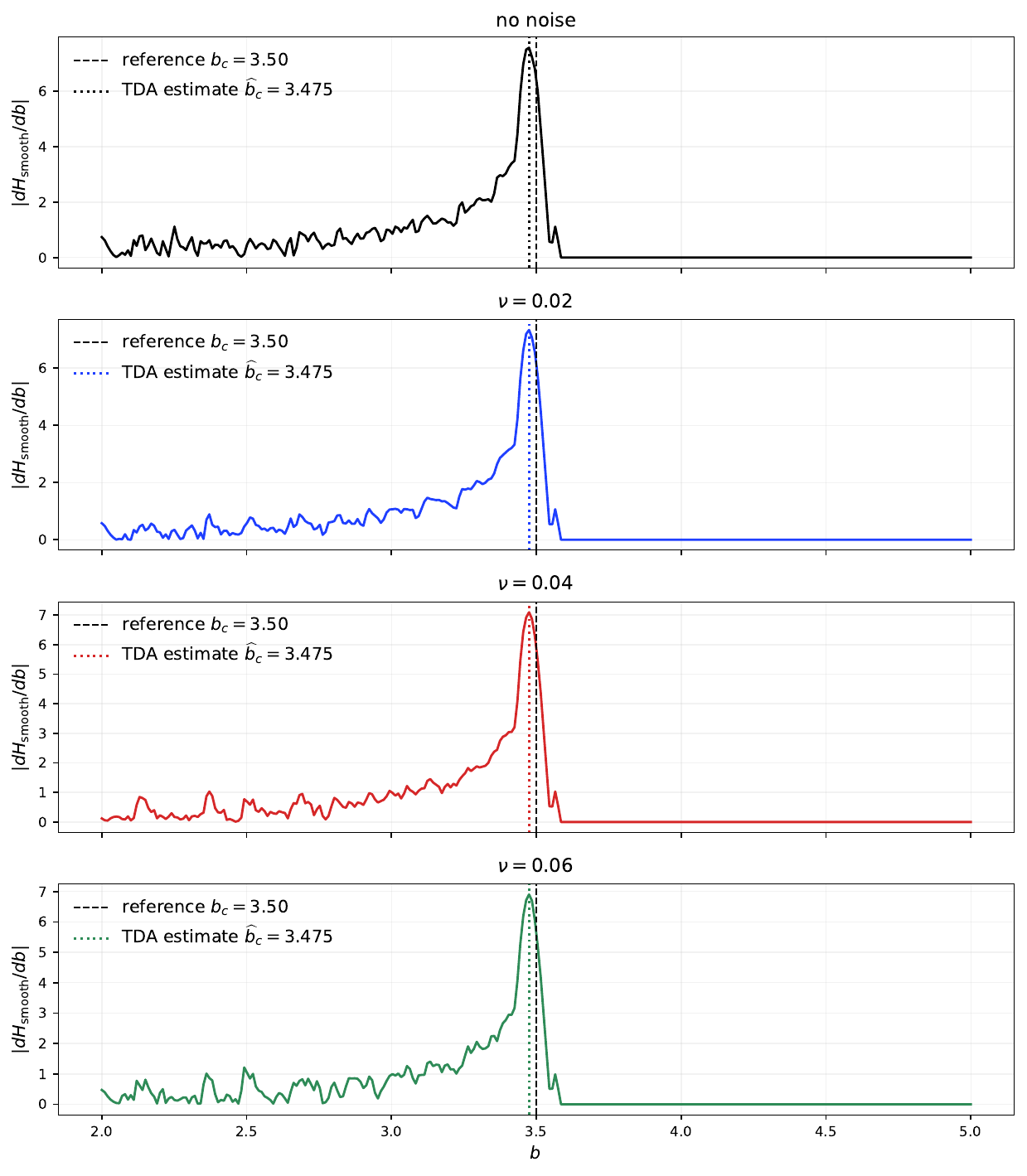}
\caption{Derivative-based localization of the BZ transition for the tested noise levels. The estimator selects the interior parameter value maximizing $|\widehat{\mathcal H}'(b)|$ after application of the common Savitzky--Golay smoothing and boundary-exclusion rules. All tested noise configurations yield $\widehat b_c^{\,\mathrm{TDA}}=3.474916$, close to the reference value $b_c=3.5$.
}
\label{fig:bz_estimator}
\end{figure}

To summarize the numerical localization results across the three systems, Table~\ref{tab:estimator} compares the final derivative-based estimates with their corresponding reference critical parameters. For the noisy experiments, each tested noise level is reported separately in order to make explicit the observed variation of the estimator. The errors should be interpreted as finite-resolution localization errors for the numerical procedure rather than as statistical estimation uncertainties.

\begin{table}[!ht]
\centering
\caption{Final performance of the derivative-based topological estimator. Absolute and relative errors are computed with respect to the reference critical parameter. The relative error is not defined for the Hopf normal form because its reference value is zero. }
\label{tab:estimator}
\scriptsize
\begin{tabular}{lcccc}
\toprule
System & Noise $\nu$ & Reference &
TDA estimate & Relative error \\
\midrule
Hopf   & 0    & $0$         & $0.016722$  & -- \\
\midrule
Lorenz & 0    & $24.736842$ & $24.648829$ & $0.3558\%$ \\
Lorenz & 0.04 & $24.736842$ & $24.615385$ & $0.4910\%$ \\
Lorenz & 0.10 & $24.736842$ & $24.615385$ & $0.4910\%$ \\
\midrule
BZ     & 0    & $3.5$       & $3.474916$  & $0.7167\%$ \\
BZ     & 0.02 & $3.5$       & $3.474916$  & $0.7167\%$ \\
BZ     & 0.04 & $3.5$       & $3.474916$  & $0.7167\%$ \\
BZ     & 0.06 & $3.5$       & $3.474916$  & $0.7167\%$ \\
\bottomrule
\end{tabular}
\end{table}

\paragraph{Sign of the finite-resolution displacement.}
For the Hopf normal form, the derivative-based estimate lies on the periodic side of the transition, $\widehat{\mu}^{\,\mathrm{TDA}}_c>\mu_c$. This is consistent with Proposition~\ref{prop:consistency}; the one-sided birth law places the singular onset of $|\mathcal H_\infty'(\mu)|$ strictly on the side $d(\mu)>0$, so regularizing this one-sided singularity on a finite parameter grid is expected to displace the numerical maximizer toward $d(\mu)>0$, i.e., toward the periodic side. This argument is specific to the supercritical attracting-cycle geometry of Proposition~\ref{prop:consistency} and does not extend to the Lorenz and BZ experiments: the classical Lorenz Hopf bifurcation is subcritical and therefore lies outside the hypotheses of Theorem~\ref{thm:hopf}, while the BZ criterion is evaluated under a reversed parameter orientation in which the dominant class disappears, rather than appears, as the parameter increases. Correspondingly, in both of these cases the estimated values fall below the reference parameter, $\widehat{\rho}^{\,\mathrm{TDA}}_c<\rho_c$ and $\widehat b_c^{\,\mathrm{TDA}}<b_c$. Since no one-sided birth law analogous to Proposition~\ref{prop:consistency} is established for the Lorenz or BZ systems, the sign of the finite-resolution displacement in these two cases is not predicted by the present theory and should be regarded as an empirical feature of the numerical procedure rather than a systematic bias with a known direction.

\subsection{Descriptive comparison with the largest Lyapunov exponent} \label{subsec:lyapunov_comparison}

The dominant topological functional and the largest Lyapunov exponent describe different aspects of a dynamical system. The quantity $\mathcal H$ is a geometric descriptor of the reconstructed point cloud, whereas Lyapunov exponents quantify asymptotic rates of infinitesimal separation or contraction along trajectories. We therefore do not expect a universal functional relationship between these quantities.

For a descriptive comparison, the largest Lyapunov exponent was evaluated along the same parameter grids used for the noise-free topological sweeps. The correlations reported below use the raw functional $\mathcal H$, without smoothing, at the same $M=300$ parameter values for each system and therefore require no interpolation. Pearson and Spearman coefficients are used only to summarize co-variation along the parameter sweeps. The sampled parameter values are not independent random replicates, so conventional iid-based $p$-values are not reported. For the Hopf normal form, the largest Lyapunov exponent was evaluated analytically. For the Lorenz and BZ systems, it was computed along the same noise-free parameter grids by integrating the corresponding variational equations with periodic renormalization of the tangent vector. Thus, the topological and dynamical quantities compared below are evaluated on the same $M=300$ parameter values.

These correlations are computed from the final, transient-controlled simulations described above, in particular the corrected long integration horizon adopted for the BZ reaction (Eqs.~\eqref{eq:bz_relaxation_rate}--\eqref{eq:bz_relaxation_time}). Shorter, insufficiently relaxed simulations near the critical parameter would be expected to yield different, and in general lower, correlation values, since an unresolved transient biases both the reconstructed topology and the estimated Lyapunov exponent; the BZ coefficients reported below should accordingly be read as specific to the corrected, transient-controlled pipeline rather than as an intrinsic property of the system independent of the simulation horizon.

\begin{table}[!ht]
\centering
\caption{ Descriptive Pearson and Spearman correlation coefficients between the raw dominant topological functional $\mathcal H$ and the largest Lyapunov exponent along the noise-free parameter sweeps. Each comparison uses the same $M=300$ parameter values. The coefficients are reported as descriptive measures of co-variation and are not interpreted as inferential tests. }
\label{tab:correlation}
\begin{tabular}{lcc}
\toprule
System & Pearson $r$ & Spearman $\rho_s$ \\
\midrule
Hopf normal form & 0.692938 & 0.884010 \\
Lorenz           & 0.968735 & 0.823589 \\
BZ reaction      & 0.724658 & 0.789635 \\
\bottomrule
\end{tabular}
\end{table}

The correlations in Table~\ref{tab:correlation} should not be interpreted as evidence that persistent homology and Lyapunov exponents measure the same dynamical quantity. In the Hopf normal form, for example, the asymptotic largest Lyapunov exponent is
\begin{equation*}
\lambda_1(\mu) =
\begin{cases}
\mu, & \mu<0,\\[1mm]
0,   & \mu\geq0,
\end{cases}
\end{equation*}

because the attracting periodic orbit has a neutral direction tangent to the flow. Thus, on the periodic side $\lambda_1$ remains zero while $\mathcal H(\mu)$ continues to increase with the geometric size of the reconstructed cycle. The relatively large rank association across the complete sweep therefore does not imply a direct functional dependence between $\mathcal H$ and the largest Lyapunov exponent.

For the Hopf normal form, a more direct dynamical relation is obtained by considering the \emph{transverse} Floquet exponent of the attracting periodic orbit.

\begin{proposition}[Normal-form relation with the transverse Floquet exponent]
\label{prop:normal_form_lyapunov}
Consider the supercritical Hopf normal form of Case~A,
\begin{equation*}
\dot r=\mu r-r^3, \qquad \dot\theta=\omega,
\end{equation*}
with $\mu>0$. The attracting periodic orbit has radius $r_*(\mu)=\sqrt{\mu}$, and its transverse Floquet exponent is

\begin{equation}\label{eq:transverse_floquet}
\lambda_\perp(\mu)=-2\mu.
\end{equation}

For a fixed scalar observation, fixed delay parameters, and corresponding sampling phases, the ideal delay reconstruction scales linearly with $r_*(\mu)$. Consequently, there exists a constant $\kappa>0$, determined by the unit-radius reconstruction and its sampling, such that

\begin{equation}\label{eq:H_normal_form_scaling}
 \mathcal H_\infty(\mu) = \kappa\sqrt{\mu}.
\end{equation}

Therefore,

\begin{equation} \label{eq:H_floquet_relation}
 \mathcal H_\infty(\mu)^2 = -\frac{\kappa^2}{2}\lambda_\perp(\mu).
\end{equation}
\end{proposition}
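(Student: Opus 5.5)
The proof has three steps: a one-dimensional computation for the periodic orbit and its Floquet exponent, a scaling argument for the delay reconstruction, and an algebraic elimination of $\mu$.

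\emph{Step 1: the orbit and its Floquet exponent.} In polar form the radial equation $\dot r=g(r):=\mu r-r^3$ decouples from $\dot\theta=\omega$. For $\mu>0$ its positive zero is $r_*(\mu)=\sqrt{\mu}$. The periodic orbit is the circle $\{r=r_*\}$, traversed with angular speed $\omega$.

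The variational equation splits into a tangential part and a radial part. The tangential part is neutral, which gives the trivial Floquet exponent $0$. The radial part is the linearization $\dot{\rho}=g'(r_*)\rho$. Since $g'(r)=\mu-3r^2$, we get $g'(r_*)=\mu-3\mu=-2\mu$, and this is constant along the orbit. So the transverse multiplier over one period $2\pi/\omega$ is $e^{-2\mu\cdot 2\pi/\omega}$, and $\lambda_\perp=-2\mu$. To avoid relying on the polar change of coordinates, I would also note that in Cartesian coordinates the monodromy matrix is conjugate to $\mathrm{diag}(1,e^{-4\pi\mu/\omega})$.

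\emph{Step 2: the scaling law.} The post-transient observation is $x(t)=\sqrt{\mu}\cos(\omega t+\phi)$. Every delay vector $\Phi_\mu(t_k)$ is therefore $\sqrt{\mu}$ times the corresponding vector for the unit-radius orbit $\cos(\omega t+\phi)$. This holds with the same $\tau$, $m$, sampling times and phase, which is the stated standing convention.

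The idealized functional $\mathcal H_\infty$ should be read as $\mathscr H$ applied to this ideal noise-free reconstruction, possibly in the dense-sampling limit. For that reading we need the elementary fact that, for $c>0$, $\mathrm{VR}(cY,c\varepsilon)=\mathrm{VR}(Y,\varepsilon)$. It follows that every birth and death scale is multiplied by $c$, and hence $\mathscr H(cY)=c\,\mathscr H(Y)$. Taking $c=\sqrt{\mu}$ gives $\mathcal H_\infty(\mu)=\kappa\sqrt{\mu}$, where $\kappa=\mathscr H(Y_1)$ and $Y_1$ is the unit-radius reconstruction.

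\emph{Step 3: positivity of $\kappa$ and the final identity.} The main obstacle is showing $\kappa>0$. I would get it from the proof of Theorem~\ref{thm:hopf} applied to $Y_1$. Assumption (H1) says the delay map embeds the unit circle; for $m=2$ the image is an ellipse that is nondegenerate provided $\omega\tau\not\equiv 0 \pmod{\pi}$. Such a curve has positive reach. If the sampling is fine enough that (H2) holds, the reference bound \eqref{eq:reference_persistence_bound} gives $\kappa\ge\eta^{\mathrm{geom}}>0$. In the dense-sampling limit, stability shows that $\kappa$ is well defined, because the VR persistence of a nondegenerate ellipse is positive.

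Finally, \eqref{eq:H_floquet_relation} is algebra: $\mathcal H_\infty^2=\kappa^2\mu$ and $\mu=-\lambda_\perp/2$, so $\mathcal H_\infty^2=-\tfrac{\kappa^2}{2}\lambda_\perp$.
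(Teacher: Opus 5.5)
Your proposal is correct and follows the paper's proof: you linearize the radial equation at $r_*=\sqrt{\mu}$ to get $\lambda_\perp=-2\mu$, use that the ideal delay reconstruction is the $\sqrt{\mu}$-rescaling of the unit-radius one so that every Vietoris--Rips lifetime scales by $\sqrt{\mu}$, and then eliminate $\mu$. Your Step~3 goes beyond the paper, which simply asserts $\kappa>0$; your route through the nondegenerate ellipse ($\omega\tau\not\equiv 0 \pmod{\pi}$) and the reference bound \eqref{eq:reference_persistence_bound} correctly shows that positivity of $\kappa$ needs a nondegenerate delay and sufficiently dense sampling (a very coarse sample, e.g.\ three points, would give $\kappa=0$).
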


\begin{proof}
For $\mu>0$, the attracting periodic orbit satisfies $r_*(\mu)=\sqrt{\mu}$. Linearizing the radial equation around $r_*$ gives
\begin{equation*}
\lambda_\perp= \left. \frac{\partial}{\partial r} \left(\mu r-r^3\right) \right|_{r=r_*}= \mu-3r_*^2 = -2\mu.
\end{equation*}

For a fixed scalar observation and fixed delay parameters, the ideal delay-coordinate reconstruction at radius $r_*(\mu)$ is a Euclidean rescaling by $r_*(\mu)$ of the corresponding unit-radius reconstruction. Euclidean rescaling by a factor $c>0$ multiplies all pairwise distances, and hence all Vietoris--Rips birth and death scales and their persistence lifetimes, by the same factor. Therefore, $\mathcal H_\infty(\mu) = \kappa r_*(\mu) = \kappa\sqrt{\mu}$ for a constant $\kappa>0$, establishing \eqref{eq:H_normal_form_scaling}. Combining this identity with \eqref{eq:transverse_floquet} yields \eqref{eq:H_floquet_relation}.
\end{proof}

The relation predicted by Proposition~\ref{prop:normal_form_lyapunov} is also observed numerically in the final Hopf parameter sweep. On the periodic side, a linear fit of $\mathcal H(\mu)^2$ against $-\lambda_\perp(\mu)=2\mu$ gives

\begin{equation}\label{eq:empirical_H_floquet}
 \mathcal H(\mu)^2  = 1.483533\,[-\lambda_\perp(\mu)] + 1.24\times10^{-6}, \qquad R^2=0.999999999978.
\end{equation}

This is consistent with the independently estimated birth-law coefficient in Figure~\ref{fig:prop1_birthlaw}, since

\begin{equation*}
 \frac{\widehat{\kappa}^{\,2}}{2} = \frac{(1.7225)^2}{2} \simeq 1.4835.
\end{equation*}

Thus, for the canonical normal form, the finite-data calculation reproduces with high numerical accuracy the proportionality predicted by Eq.~\eqref{eq:H_floquet_relation}.

The comparison has a different interpretation for the other two systems. For the Lorenz system, the high Pearson coefficient $r=0.968735$ indicates strong co-variation between the reconstructed topological geometry and the largest Lyapunov exponent across the parameter sweep. However, the classical Lorenz Hopf bifurcation is subcritical and the experiment does not satisfy the attracting-periodic-orbit assumptions of Theorem~\ref{thm:hopf}. The observed association is therefore descriptive and reflects the simultaneous response of two different quantities to a major dynamical reorganization; it should not be interpreted as validation of a universal persistence--Lyapunov relationship.

For the BZ system, the corresponding coefficients, $r=0.724658$ and $\rho_s=0.789635$, also indicate substantial co-variation along the parameter sweep. Nevertheless, the largest Lyapunov exponent remains non-positive over the considered parameter range while the dominant persistence changes markedly as the oscillatory branch contracts and disappears. This again illustrates that $\mathcal H$ and the largest Lyapunov exponent characterize different aspects of the dynamics: the former describes global geometry in the reconstructed point cloud, whereas the latter describes local asymptotic stability.

Taken together, these comparisons support a complementary rather than equivalent interpretation of the topological and Lyapunov-based descriptors. The exact relation \eqref{eq:H_floquet_relation} is specific to the scaling geometry of the Hopf normal form under a fixed delay reconstruction and is not asserted for the Lorenz or BZ systems.

\section{Conclusions and Future Directions}
\label{sec:discussion}

\subsection*{Summary of findings}

In this work, we developed a topological framework for localizing Hopf-type dynamical transitions directly from scalar time series using persistent homology of fixed delay-coordinate reconstructions. The central observable is the dominant one-dimensional persistence $\mathcal H(\theta)$, which measures the prominence of the most persistent cyclic feature in the reconstructed point cloud.

The theoretical analysis establishes pointwise persistence bounds for the supercritical attracting cycle setting. In particular, Theorem~\ref{thm:hopf} relates detectability of the reconstructed cycle to its reach, finite sampling density, and finite-data perturbations, while Corollary~\ref{cor:finite_resolution} makes explicit that a uniform positive separation can only be expected on parameter regions bounded away from the critical value, consistent with the absence of a uniform threshold near $\mu_c$ noted in Remark~\ref{rem:finite_resolution}. Corollary~\ref{cor:gaussian_noise} provides a finite-sample probabilistic connection between these deterministic perturbation bounds and Gaussian observational noise.

A derivative-based estimator was then introduced to localize the strongest change of the sampled topological functional. The numerical experiments show that its output must be interpreted at finite resolution rather than as an exact finite-grid recovery theorem. For the Hopf normal form, the baseline computation gives $\widehat{\mu}^{\,\mathrm{TDA}}_c= 0.016722$ (\eqref{eq:hopf_final_estimate}), relative to the exact value $\mu_c=0$. The corresponding birth-law analysis yields $\widehat{\alpha}=0.5000$, recovering numerically the square-root scaling predicted by the normal-form geometry. The finite-resolution study further shows that the location of the derivative maximum depends materially on the smoothing bandwidth, while remaining unchanged across the tested temporal sampling frequencies when the physical delay and observation horizon are kept fixed.

The Lorenz experiment provides a complementary empirical stress test. For the classical parameters, the Hopf bifurcation of the nonzero equilibria is subcritical and therefore lies outside the attracting-periodic-orbit assumptions of Theorem~\ref{thm:hopf}. Nevertheless, the strongest change of the topological functional is localized close to the reference value $\rho_c=24.736842\ldots$, with relative errors below $0.5\%$ for all tested noise configurations. This result is interpreted as empirical detection of a pronounced geometric reorganization rather than as a direct validation of the theorem.

For the BZ system, the attracting oscillatory branch lies on the $b<b_c$ side and the dominant cyclic persistence disappears as the parameter increases through the transition. After explicitly controlling the slow near-critical transient, the final estimator gives $\widehat b_c^{\,\mathrm{TDA}}=3.474916$, compared with the reference value $b_c=3.5$, corresponding to a relative localization error of approximately $0.7167\%$. The large displacement observed in preliminary shorter simulations was therefore not retained as an intrinsic bias of the estimator; it was primarily associated with insufficient transient removal.

Finally, the comparison with the largest Lyapunov exponent presented in Section~\ref{subsec:lyapunov_comparison} indicates that the topological and dynamical quantities should be viewed as complementary rather than equivalent descriptors. For the Hopf normal form, the more direct theoretical connection is with the transverse Floquet exponent. Proposition~\ref{prop:normal_form_lyapunov} gives $ \mathcal H_\infty(\mu)^2=-\frac{\kappa^2}{2}\lambda_\perp(\mu)$ (\eqref{eq:H_floquet_relation}), and the finite-data calculation reproduces this proportionality with high numerical accuracy (\eqref{eq:empirical_H_floquet}). No analogous universal relationship is asserted for the Lorenz or BZ systems.

\subsection*{Impact and relevance}
The results support the use of persistent homology as a quantitative descriptor of geometric reorganizations in reconstructed nonlinear dynamics. The contribution is not the first use of persistence-based quantities for detecting dynamical transitions: previous approaches have already employed quantitative distances between persistence diagrams and other topological summaries \cite{BerwaldGidea2014,Guzel2022,Tymochko2020}. The distinction of the present framework lies in combining a particularly simple scalar $H_1$ observable with explicit geometric persistence bounds and an operational derivative-based localization rule.

An important feature of the proposed observable is its geometric interpretability. In the canonical supercritical Hopf setting, $\mathcal H$ directly reflects the growth of a reconstructed cyclic attractor, and the numerical square-root law connects the persistence scale to the amplitude law of the bifurcation. At the same time, the BZ experiment shows that the derivative-based procedure can be applied when the same cyclic feature disappears rather than appears as the control parameter is increased.

The computational procedure itself requires only an ordered family of scalar time series and fixed reconstruction parameters; it does not require evaluation of a Jacobian, eigenvalues, or Lyapunov exponents. The governing equations were used in the present synthetic experiments to generate data and provide reference critical values, not as inputs to the topological estimator.

\subsection*{Scope and limitations}

The theoretical and numerical results should be interpreted within several aspects of the scope of the present study. First, Theorem~\ref{thm:hopf} is formulated specifically for a supercritical Hopf bifurcation in which post-transient trajectories sample an attracting periodic orbit. The classical Lorenz experiment lies outside this setting because its Hopf bifurcation is subcritical and is therefore included as an empirical stress test rather than as a direct validation of the theorem. The method also depends on the geometry induced by the delay reconstruction. Although the embedding parameters are held fixed within each parameter sweep and their sensitivity is examined in the Hopf experiment, the value $m=2$ used here should not be interpreted as a general guarantee that the sufficient conditions of Takens' embedding theorem are satisfied, particularly for the Lorenz attractor.

A second limitation concerns numerical localization. The persistence bounds of Theorem~\ref{thm:hopf} establish geometric detectability, whereas Definition~\ref{def:estimator} identifies the maximum derivative of a smoothed functional evaluated on a finite parameter grid. The present analysis therefore does not establish a finite-sample convergence rate or confidence interval for $\widehat{\theta}^{\,\mathrm{TDA}}_c$. The Hopf sensitivity study shows that smoothing bandwidth can materially affect the location of the numerical maximum. Moreover, the Hopf and BZ experiments illustrate that slow near-critical transients must be adequately removed before the reconstructed topology can be interpreted as representative of the asymptotic regime; as noted in Section~\ref{sec:experiments}, no analogous transient diagnostic was performed for the Lorenz system, which is a further reason to read its localization result descriptively rather than as a validation of Theorem~\ref{thm:hopf}.

The Gaussian-noise experiments should also be interpreted as reproducible sensitivity tests rather than as a complete statistical robustness analysis. One realization was used for each parameter and noise-level configuration, with paired Gaussian realizations across positive noise amplitudes at a fixed parameter value. Consequently, the observed stability across the tested configurations does not by itself characterize the sampling distribution or uncertainty of the estimator under repeated observational noise.

Finally, the present numerical validation is based on synthetic dynamical systems for which parameterized families of time series and reference critical values are available. Application to experimental data will require addressing additional effects such as irregular sampling, nonstationarity, unknown transient duration, and parameter uncertainty. Persistent-homology computations may also become more demanding for substantially larger reconstructed point clouds, so statistical and computational scalability remain relevant topics for future work.

\subsection*{Future directions}

Several extensions follow naturally from these limitations. A first direction is the development of statistical uncertainty quantification for the critical-parameter estimator, including repeated-noise experiments, bootstrap or subsampling strategies adapted to dependent time-series data, and finite-sample confidence statements for topological transition localization.

A second direction is a more systematic theory for reconstruction parameters. In particular, it would be useful to quantify how delay, embedding dimension, sampling rate, and point-cloud subsampling affect the reach and persistence margin appearing in Theorem~\ref{thm:hopf}. Such results could connect practical delay-coordinate selection more directly with the geometric detectability conditions of the theory.

A third direction concerns the localization estimator itself. The continuous birth-law analysis explains why the derivative becomes singular in the canonical supercritical case, but a complete theory should characterize the combined effects of parameter discretization, finite observation time, persistence estimation, and smoothing on $\widehat{\theta}^{\,\mathrm{TDA}}_c$. Adaptive or multiscale regularization strategies may reduce the dependence on a fixed Savitzky--Golay bandwidth.

Another important direction is the automatic diagnosis of transient convergence. The BZ experiment demonstrates that near-critical slowing can strongly contaminate the reconstructed topology if the retained trajectory has not reached the relevant asymptotic regime. Data-driven criteria for deciding when a trajectory is sufficiently stationary for topological analysis would substantially improve applicability when the equations and linear relaxation rates are unavailable; extending such a diagnostic to systems such as Lorenz, for which it was not performed here, is a natural first step.

The framework could also be extended to other bifurcation mechanisms, including period-doubling, torus, and global bifurcations, for which different persistent-homology signatures or homological dimensions may be more informative. Finally, application to experimental biological, chemical, physiological, or engineering time series is an essential next step for determining how the proposed geometric criterion behaves under realistic sampling, measurement noise, nonstationarity, and model uncertainty.

\bibliography{Bibliography}
\bibliographystyle{acm}

\end{document}